\newcolumntype{L}[1]{>{\raggedright\let\newline\\\arraybackslash\hspace{0pt}}m{#1}}
\newcolumntype{C}[1]{>{\centering\let\newline\\\arraybackslash\hspace{0pt}}m{#1}}
\newcolumntype{R}[1]{>{\raggedleft\let\newline\\\arraybackslash\hspace{0pt}}m{#1}}
\pgfplotsset{compat=1.16}
\tikzstyle{internal} = [draw, fill, shape=circle]
\tikzstyle{external} = [shape=circle]
\tikzstyle{square}   = [draw, fill, rectangle]
\tikzstyle{triangle} = [draw, fill, regular polygon, regular polygon sides=3, inner sep=3pt]
\tikzstyle{pentagon} = [draw, fill, regular polygon, regular polygon sides=5, inner sep=2pt, minimum size=14pt]
\tikzset{every fit/.append style=text badly centered}
\tikzset{>=latex} 
\newcommand{\Ex}{\mathop{\mathbb{{}E}}\nolimits}
\renewcommand{\Pr}{\mathop{\mathrm{Pr}}\nolimits}
\def\*#1{\mathbf{#1}}
\def\+#1{\mathcal{#1}}
\def\-#1{\mathrm{#1}}
\def\=#1{\mathbb{#1}}
\newcommand{\abs}[1]{\ensuremath{\left\vert#1\right\vert}}
\newcommand{\Var}[2]{\ensuremath{\textnormal{Var}_{#1}\left(#2\right)}}
\newcommand{\defeq}{:=}
\renewcommand{\P}{\textnormal{\textbf{P}}}
\newcommand{\numP}{\#{\textnormal{\textbf{P}}}}
\DeclareMathOperator{\height}{ht}
\newcommand{\nearrowblue}{\begingroup\color{blue}\nearrow\endgroup}
\newcommand{\nearrowred}{\begingroup\color{red}\nearrow\endgroup}
\newcommand{\searrowblue}{\begingroup\color{blue}\searrow\endgroup}
\newcommand{\searrowred}{\begingroup\color{red}\searrow\endgroup}
\theoremstyle{plain}
\newtheorem{theorem}{Theorem}
\newtheorem{lemma}[theorem]{Lemma}
\newtheorem{fact}[theorem]{Fact}
\newtheorem{proposition}[theorem]{Proposition}
\newtheorem{corollary}[theorem]{Corollary}
\theoremstyle{definition}
\newtheorem{definition}[theorem]{Definition}
\newtheorem{example}[theorem]{Example}
\newtheorem*{example*}{Example}
\newtheorem*{remark}{Remark}
\crefname{theorem}{Theorem}{Theorems}
\crefname{observation}{Observation}{Observations}
\crefname{claim}{Claim}{Claims}
\crefname{condition}{Condition}{Conditions}
\crefname{algorithm}{Algorithm}{Algorithms}
\crefname{property}{Property}{Properties}
\crefname{example}{Example}{Examples}
\crefname{fact}{Fact}{Facts}
\crefname{lemma}{Lemma}{Lemmas}
\crefname{corollary}{Corollary}{Corollaries}
\crefname{definition}{Definition}{Definitions}
\crefname{remark}{Remark}{Remarks}
\crefname{proposition}{Proposition}{Propositions}
\crefname{equation}{equation}{equations}
\crefname{enumi}{Case}{Case}
\def\prob#1#2#3{\goodbreak\begin{list}{}{\labelwidth\z@ \itemindent-\leftmargin
      \itemsep\z@  \topsep6\p@\@plus6\p@
      \let\makelabel\descriptionlabel}
  \item[\textbf{Name}]#1
  \item[\textbf{Instance}]#2
  \item[\textbf{Output}]#3
  \end{list}}
\providecommand\@dotsep{5}
\def\listtodoname{Todo list}
\def\listoftodos{\@starttoc{tdo}\listtodoname}
\title{Rapid mixing of the flip chain over non-crossing spanning trees}
\author{Author(s)}
\author{Konrad Anand, Weiming Feng, Graham Freifeld, Heng Guo, Mark Jerrum, Jiaheng Wang}
\address[Mark Jerrum]{School of Mathematical Sciences, Queen Mary University of London, Mile End Road, London, E1 4NS, United Kingdom.}
\address[Weiming Feng]{School of Computing and Data Science, The University of Hong Kong, Pokfulam Road, Hong Kong, China.}
\address[Konrad Anand, Graham Freifeld, Heng Guo]{School of Informatics, University of Edinburgh, Informatics Forum, Edinburgh, EH8 9AB, United Kingdom.}
\address[Jiaheng Wang]{Faculty of Informatics and Data Science, University of Regensburg, Bajuwarenstra{\ss}e 4, 93053 Regensburg, Germany.}
\email{konrad.anand@me.com}
\email{wfeng@hku.hk}
\email{g.freifeld@sms.ed.ac.uk}
\email{hguo@inf.ed.ac.uk}
\email{m.jerrum@qmul.ac.uk}
\email{pw384@hotmail.com}
\thanks{This project has received funding from the European Research Council (ERC) under the European Union's Horizon 2020 research and innovation programme (grant agreement No.~947778). Weiming Feng acknowledges the support of Dr. Max R\"ossler, the Walter Haefner Foundation, and the ETH Z\"urich Foundation during his affiliation at ETH Z\"urich. Jiaheng Wang also acknowledges support from the ERC (grant agreement No. 101077083).}
\begin{document}

\begin{abstract}
    We show that the flip chain for non-crossing spanning trees of $n+1$ points in convex position mixes in time $O(n^8\log n)$.
    We use connections between Fuss-Catalan structures to construct a comparison argument with a chain similar to Wilson's lattice path chain (Wilson 2004).
\end{abstract}
\maketitle

\section{Introduction}

The reconfiguration of planar objects is an intensively studied topic in computational geometry. 
Given a set of points, such a planar structure can be a triangulation, a non-crossing perfect matching, or a \emph{non-crossing spanning trees} (NCST), etc. 
In this paper we focus on the last one. 
Given $n+1$ points in a plane, an NCST is a spanning tree such that no two edges intersect or overlap (except on the joint vertex) when all edges are drawn as straight line segments on the plane. 
The reconfiguration of NCSTs is done by \emph{flips}, which remove an edge of the current tree, and then add an edge back so that the resulting graph is still a valid NCST. 
See \Cref{fig:flip-example} for an example. 
Given two NCSTs over the same point set in general position, Avid and Fukuda \cite{AF96} first proved that it takes at most $2n-2$ flips to move from one to the other. 
On the other hand, Hernando, Hurtado, M\'{a}rquez, Mora and Noy \cite{HHMMN99} constructed an example where it takes at least $1.5n-O(1)$ flips, even when the points are \emph{in convex position}. 
These bounds have not been improved until recently \cite{ABBDDKLLTU22,BGNP23,BdMPW23}.
Very recently, Bjerkevik, Kleist, Ueckerdt, and Vogtenhuber \cite{BKUV24} improved the upper bound to $5/3 n-3$ and the lower bound to $14/9 n-O(1)$ when all points are in convex position. 

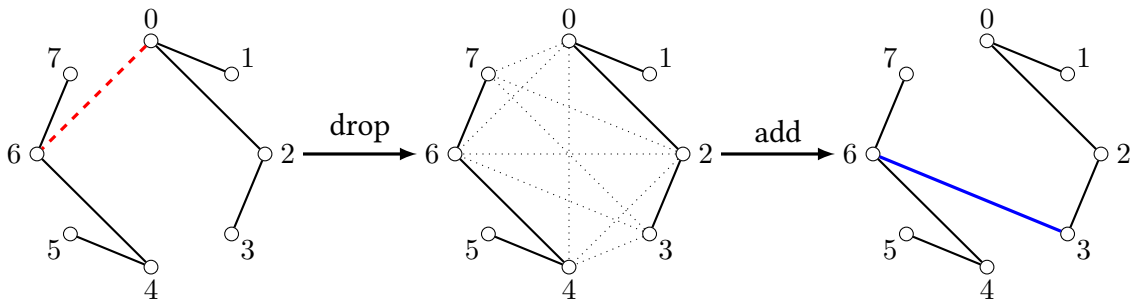
\begin{figure}[ht]
\begin{tikzpicture}
\foreach \x in {0,...,7} {
    \node [shape=circle,draw=black,fill=white,scale=0.5] (A\x) at ({90-360/8*\x}:1.5) {};
    \node [shape=circle,draw=black,fill=white,scale=0.5] (B\x) at ($(A\x)+(5.5,0)$) {};
    \node [shape=circle,draw=black,fill=white,scale=0.5] (C\x) at ($(B\x)+(5.5,0)$) {};
    \node (X\x) at ({90-360/8*\x}:1.8) {$\x$};
    \node (Y\x) at ($(X\x)+(5.5,0)$) {$\x$};
    \node (Z\x) at ($(Y\x)+(5.5,0)$) {$\x$};
}
\path [thick] (A0) edge node [] {} (A1);
\path [thick] (A0) edge node [] {} (A2);
\path [thick] (A2) edge node [] {} (A3);
\path [thick] (A4) edge node [] {} (A5);
\path [thick] (A4) edge node [] {} (A6);
\path [thick] (A6) edge node [] {} (A7);

\path [thick] (B0) edge node [] {} (B1);
\path [thick] (B0) edge node [] {} (B2);
\path [thick] (B2) edge node [] {} (B3);
\path [thick] (B4) edge node [] {} (B5);
\path [thick] (B4) edge node [] {} (B6);
\path [thick] (B6) edge node [] {} (B7);

\path [thick] (C0) edge node [] {} (C1);
\path [thick] (C0) edge node [] {} (C2);
\path [thick] (C2) edge node [] {} (C3);
\path [thick] (C4) edge node [] {} (C5);
\path [thick] (C4) edge node [] {} (C6);
\path [thick] (C6) edge node [] {} (C7);

\path [very thick,dashed,red] (A0) edge node [] {} (A6);
\path [dotted] (B0) edge node [] {} (B4);
\path [dotted] (B0) edge node [] {} (B6);
\path [dotted] (B0) edge node [] {} (B7);
\path [dotted] (B2) edge node [] {} (B4);
\path [dotted] (B2) edge node [] {} (B6);
\path [dotted] (B2) edge node [] {} (B7);
\path [dotted] (B3) edge node [] {} (B4);
\path [dotted] (B3) edge node [] {} (B6);
\path [dotted] (B3) edge node [] {} (B7);
\path [very thick,blue] (C3) edge node [] {} (C6);

\path [->,very thick] (2.0,0) edge node [midway, above] {\large drop} (3.5,0);
\path [->,very thick] (7.5,0) edge node [midway, above] {\large add} (9.0,0);
\end{tikzpicture}
\caption{Illustration of a flip move. The red, dashed edge in the leftmost figure is dropped. This gives $9$ possible edges (dotted in the middle figure) that can be added back to form a valid NCST. The blue, thick edge in the rightmost figure is picked. }
\label{fig:flip-example}
\end{figure}

Another classic topic in computational geometry is to count these planar structures.
Traditional combinatorial studies focus on asymptotic upper and lower bounds of these structures (see e.g.~\cite{MR2011739,SS11,SSW11}).
Motivations for counting these structures have appeared in other topics, such as quantum gravity \cite{JURKIEWICZ1986273, KAZAKOV1985295}, plane curves \cite{MR1036837}, algebraic geometry \cite{MR2075609, MR4186370}, and the theory of disriminants \cite{MR2394437}.
From the computational perspective, the complexity of the counting problem seems to remain elusive in most cases,
with an exception of counting triangulations of (not necessarily simple) polygons known to be \numP-hard \cite{Epp20}.
On the other hand, counting the number of triangulations of a given set of points remains unresolved (but it is conjectured to be \numP-hard, see e.g.~\cite{DRS10} or \cite[\textsection 6.7]{CP23}).
Similarly, counting NCSTs over a set of points in general position apparently is also not known to be \numP-hard nor polynomial-time solvable,
although we conjecture the former to be the case.
To cope with the apparent intractability, sub-exponential time and fixed-parameterised approximation algorithms have been studied \cite{ABCR15,ABRS15,KLS18},
but the complexity of approximate counting NCSTs does not appear to be known either. 

On the other hand, when these $n+1$ points are in convex position, the corresponding counting problem becomes easy to solve. 
One can show that the number of NCSTs, denoted by $C_{2,n}$, is subject to the following recursion: (see e.g.~\cite{Noy98})
\begin{equation} \label{equ:catalan-fuss-2}
  C_{2,0}=1,~ C_{2,n}=\sum_{\substack{i,j,k\in\mathbb{Z}_{\geq 0},\\i+j+k=n-1}}C_{2,i}C_{2,j}C_{2,k},
  \qquad
  \implies
  \qquad
  C_{2,n}=\frac{1}{2n+1}\binom{3n}{n}.
\end{equation}
Sharing a similar form as the famous \emph{Catalan numbers}, this kind of generalisation is called the \emph{$2$-Fuss-Catalan numbers}. 
The ($k$-Fuss-)Catalan numbers capture more than 200 natural combinatorial structures. 
For example, the Catalan numbers count the number of triangulations, non-crossing perfect matchings and non-crossing partitions when the points are in convex position. 
The $k$-Fuss-Catalan numbers count the number of $k$-rooted plane trees, $(k+1)$-ary trees, $k$-Dyck paths, and many more. 
We refer interested readers to Stanley's monograph \cite{Sta15} for an exhaustive survey about these Fuss-Catalan structures.  

In this paper, we are interested in where reconfiguration and approximate counting meet. 
We consider a Markov chain whose steps are random flip moves over NCSTs, denoted the \emph{flip chain}.
In each step, we drop an edge chosen uniformly at random, and then add back an edge uniformly at random among all valid choices.
(This chain is formally defined in \eqref{equ:fm-chain}.)
Reconfiguration results \cite{AF96,HHMMN99,BKUV24} thus imply that the flip chain is \emph{irreducible} (namely, the state space of all NCSTs is connected via flip moves), 
and when the points are in convex position, the \emph{diameter} of the chain is bounded between $14/9n-O(1)$ and $5/3n-3$. 
Markov chains are also the most common approach to approximately count the number of combinatorial structures \cite{JS89,DFK91,JS93}.
While the size of the state space (namely the total number of structures to count) can be exponentially large, 
the hope is that the Markov chain converges to the uniform distribution within polynomially many steps.
The time to converge is called the \emph{mixing time} (defined in \Cref{sec:markov-chain}).
When the mixing time is bounded by a polynomial, the chain is said to be rapid mixing, 
and rapid mixing Markov chains usually lead to efficient approximate counting algorithms.

The mixing times of these geometrically defined Markov chains have received considerable attention from the Markov chain community in the past few decades,
even when restricted to points in convex position,
such as the chains over convex polygon triangulations \cite{MRS97,MT97,EF23}, over triangulations on lattices and spheres \cite{MR3325284, MR3634687}, or over lattice paths and (1-)Dyck paths \cite{MR00,Wil04,CS20}.
See Cohen's thesis \cite{Coh16} for a more detailed explanation and comprehensive overview about random walks over $1$-Fuss-Catalan structures.
Beyond the fields of computational geometry and Markov chain analysis, these chains also find numerous applications, often thanks to the wide connections of Fuss-Catalan numbers, 
such as for quantum spin systems \cite{BCMNS12,MS16,Mov18} or even in algebraic geometry \cite{BDGJPS23}. 


In this paper we give the first polynomial upper bound on the mixing time of the flip chain over NCSTs for points in convex position.

\begin{theorem}  \label{thm:NCST}
For $n+1$ points on the plane in convex position,
the mixing time of the flip chain over their non-crossing spanning trees is $O(n^8\log n)$.
\end{theorem}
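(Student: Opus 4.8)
The plan is to push the problem onto a lattice-path model through the Fuss--Catalan correspondence underlying \eqref{equ:catalan-fuss-2}, to analyse there a local ``mountain--valley'' chain in the spirit of Wilson~\cite{Wil04}, and then to transfer the resulting spectral-gap bound back to the flip chain by a comparison (canonical-paths) argument. Concretely, I would first fix an explicit bijection $\Phi$ between the non-crossing spanning trees of $n+1$ points in convex position and a concrete $2$-Fuss--Catalan object counted by $C_{2,n}=\frac1{2n+1}\binom{3n}{n}$: reading the ternary recursion in \eqref{equ:catalan-fuss-2} as the root-decomposition of a ternary tree, and then passing to the associated $2$-Dyck lattice path, which is the convenient representative because it carries a natural height function. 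In the course of this I would record how a single flip move (\Cref{fig:flip-example}) acts through $\Phi$ --- it should translate into a bounded-complexity, though not single-site, edit of the path --- and note that the flip chain is reversible with respect to the uniform distribution $\pi$: deleting an edge $e$ from a tree $T$ creates a cut, the number of legal edges that may be added back across that cut depends on the cut alone, and hence the forward and backward flip probabilities between any two trees coincide.

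\textbf{Auxiliary chain.} On the lattice-path side I would introduce the natural single-box chain $M$: pick a coordinate uniformly at random and, if the local configuration allows, push the path up or down by one box there. This chain is irreducible and reversible with respect to the uniform measure, and --- transported by $\Phi$ --- lives on the same state space as the flip chain. Its mixing time is governed by exactly the height-function technology Wilson developed for monotone lattice paths: the coupling-with-eigenfunction argument (measuring the contraction of a monotone coupling through an eigenfunction of the underlying simple random walk) should give a polynomial bound, which I would aim to pin down as $\mixingtime{M}=O(n^{3}\log n)$, i.e. $\mathrm{gap}(M)=\Omega\big(1/(n^{3}\log n)\big)$. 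The only genuinely new feature compared with Wilson's setting is the slanted boundary of slope $1/2$ (and a second boundary rather than a staircase), which affects constants but not the method.

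\textbf{Comparison.} Since $M$ and the flip chain $P$ are both reversible with respect to the same $\pi$, I would invoke the Diaconis--Saloff-Coste comparison inequality: for every transition $(u,v)$ of $M$ exhibit a path $\gamma_{uv}$ of flip moves that realises the corresponding edit $\Phi^{-1}(u)\to\Phi^{-1}(v)$ of trees, and control the congestion
\[
  A \;=\; \max_{(z,w)\,:\,P(z,w)>0}\ \frac{1}{P(z,w)}\sum_{\gamma_{uv}\ni(z,w)}\abs{\gamma_{uv}}\,M(u,v),
\]
which yields $\mathrm{gap}(P)\ge \mathrm{gap}(M)/A$. Each $\gamma_{uv}$ can be taken of length $O(n)$ --- a box-push should be implementable by a short, structured sequence of edge swaps in the tree, rather than via the generic reconfiguration diameter of at most $5n/3$ flips, which would be far too lossy --- so the substantive task is to show that no flip edge $(z,w)$ lies on more than $O(n)$ of the paths $\gamma_{uv}$. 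This is an encoding argument: from $(z,w)$ together with a small amount of auxiliary data one must be able to reconstruct the pair $(u,v)$. I expect this congestion estimate to be the main obstacle, because it is the step where one genuinely has to argue with the geometry of non-crossing trees --- ruling out ``bottleneck'' flip edges through which linearly many simulations would be routed --- rather than with soft Markov-chain theory.

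\textbf{Assembling the bound.} Finally I would combine $\mathrm{gap}(P)\ge \mathrm{gap}(M)/A$ with the standard estimate $\mixingtime{P}=O\big(\mathrm{gap}(P)^{-1}\log(1/\pimin)\big)$ and $\log(1/\pimin)=\log C_{2,n}=O(n)$ from \eqref{equ:catalan-fuss-2}. Tracking the exponents --- $1/P(z,w)=O(n^{3})$, path length $O(n)$, $M(u,v)=O(1/n)$, and $O(n)$ paths through a fixed flip edge, giving $A=O(n^{4})$; hence $\mathrm{gap}(P)=\Omega\big(1/(n^{7}\log n)\big)$; and the extra $O(n)$ from $\log(1/\pimin)$ --- this should collapse to $\mixingtime{P}=O(n^{8}\log n)$, the claimed bound.
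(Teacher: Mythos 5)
Your plan is essentially the paper's proof, and your exponent bookkeeping is exactly right: the bijection to $2$-Dyck paths via the Fuss--Catalan recursion, Wilson's coupling argument giving $O(n^3\log n)$ mixing for the adjacent-move (single-box) chain, the Diaconis--Saloff-Coste comparison with congestion $O(n^4)$ (from $1/P_{\mathrm{FM}}(Z,Z')=O(n^3)$, path length $O(n)$, $P_{\mathrm{AM}}(I,F)=O(1/n)$, and $O(n)$ paths per flip transition), and finally $\log(1/\pimin)=O(n)$ to pass from relaxation time to mixing time, yielding $O(n^8\log n)$. So there is no disagreement of route or of constants.

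What is genuinely missing is the one step you flag as ``the main obstacle'': the canonical-path construction and the encoding that certifies $O(n)$ congestion. This is not a routine step and it is where all the content of the paper lives. Your phrase ``a box-push should be implementable by a short, structured sequence of edge swaps'' hides the difficulty: a single adjacent swap in the $2$-Dyck path can change $\Omega(n)$ edges of the corresponding tree (as in \Cref{exp:bad-correspondence} and \Cref{fig:correspondence-example}), so ``short'' and ``structured'' both need an argument. The paper's resolution is a specific characterisation (\Cref{lem:AM-simulation}): an adjacent move equals at most two edge flips plus one \emph{shift} in the sense of \eqref{equ:shifting-illustration}, and a shift of an $m$-edge subtree is realised by at most $3m$ flips via a recursive decomposition into minimal segments (\Cref{lem:AM-shifting}). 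Crucially, this recursion is designed so that from a flip transition $(Z,Z')$ together with an encoding $(\mathsf{dir},M,d)\in\{\mathsf{Left},\mathsf{Right}\}\times\{M_1,\dots,M_4,S_1,S_2\}\times[n]$ of size $O(n)$ one can reconstruct $(I,F)$ uniquely (\Cref{lem:AM-vs-FM-encoding}) --- that is what makes the congestion $O(n)$ rather than, say, exponential. Without some such hierarchical shift construction and a matching injectivity argument, the comparison does not close; a naive implementation of the box-push by arbitrary flips gives no control on how many $(I,F)$ pairs route through a given transition. So the proposal correctly identifies the architecture and the arithmetic, but does not supply the idea that actually proves the bound.
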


The main feature that sets \Cref{thm:NCST} apart from the results mentioned above is that NCSTs for points in convex position are $2$-Fuss-Catalan structures.
This renders some tools unavailable and others, such as the path comparison method \cite{DS93}, a lot more intricate to use.
We also note that while the very similarly defined bases-exchange chain over spanning trees of a graph is known to have an optimal mixing time \cite{ALOV19,CGM19},
NCSTs do not form the bases of a matroid and these results apparently do not apply.
We give an overview of our proofs next.

\subsection{Technical overview}\label{sec:technique}

Our main strategy is to compare the flip chain with another chain that is easier to analyse.
Naturally we would choose a chain over structures that have a bijection with NCSTs.
There are a few candidates (see e.g.~\cite{Sta99}), and we choose $2$-Dyck paths.
The bijection is defined through the same recursive structure, such as \eqref{equ:catalan-fuss-2}, for both $2$-Dyck paths and NCSTs.
Note that while these structures share the same count and have bijections among them,
a local move in one structure may cause drastic changes in another through the bijection.
Thus we need to pick a starting point chain so that the distortion caused by the bijection is manageable and the overhead of the comparison can be bounded by a polynomial.


We represent a $2$-Dyck path by a string consisting of $2n$ ups ($\nearrow$) and $n$ downs ($\searrow$) such that any prefix of the string contains no fewer ups than twice the number of downs. 
The formal definition is given in \Cref{sec:dyck-path}. 
One can verify that the positions of the down arrows form a basis of a matroid.
This generalises the so-called Catalan matroid introduced by Ardila \cite{Ard03}.
Since the bases-exchange walk for these matroids has an $O(n\log n)$ mixing time \cite{CGM19},
a natural choice would be to compare this chain with the flip chain over NCSTs.
This chain randomly chooses a down arrow and move it to a random valid location.
However, as mentioned before, one move in the bases-exchange walk may change NCSTs drastically. 
See \Cref{exp:bad-correspondence}.

\begin{example}\label{exp:bad-correspondence}
Consider two $2$-Dyck paths of length $39$: 

\def\tempmark#1#2#3{
\tikz[overlay]
\draw[fill=#3,fill opacity=0.2,draw opacity=0] (#1,#2) rectangle ++(0.4,-1.15);
}
\resizebox{0.9\linewidth}{!}{
\begin{minipage}{\linewidth}
\begin{equation*}
\tempmark{8.25}{0.7}{orange}
\tempmark{15.2}{0.7}{cyan}
\begin{aligned}
W_1&=\nearrow\nearrow\nearrow\nearrow\nearrow\nearrow\searrow\nearrow\nearrow\nearrow\searrow\searrow\nearrow\nearrow\nearrow\searrow\nearrow\searrow\nearrow\nearrow\nearrow\searrow\nearrow\searrow\nearrow\nearrow\nearrow\nearrow\searrow\nearrow\searrow\nearrow\nearrow\nearrow\searrow\nearrow\searrow\searrow\searrow,\\
W_2&=\nearrow\nearrow\nearrow\nearrow\nearrow\nearrow\searrow\nearrow\nearrow\nearrow\searrow\searrow\nearrow\nearrow\nearrow\searrow\nearrow\searrow\nearrow\searrow\nearrow\searrow\nearrow\searrow\nearrow\nearrow\nearrow\nearrow\searrow\nearrow\searrow\nearrow\nearrow\nearrow\searrow\nearrow\searrow\nearrow\searrow.
\end{aligned}
\end{equation*}
\end{minipage}
}
\undef\tempmark
\vspace{0.1cm}

They differ at position $20$ (shaded by orange) and $38$ (shaded by cyan). 
As $W_2$ can be obtained from $W_1$ through moving the position of one down arrow,
they are connected by one move of the bases-exchange chain.
However, their corresponding NCSTs induced by the bijection of the Fuss-Catalan recurrence (see \Cref{sec:fuss-catalan}) do not share any edge in common, and there is no obvious pattern to note. 
The corresponding trees are illustrated as $W_1$ and $W_2$ in \Cref{fig:correspondence-example}. 
\end{example}

Since the bases-exchange chain appears to be difficult to compare with,
we choose a different chain to control the changes in the corresponding NCSTs,
and that is the adjacent move chain.
Instead of moving a down arrow to a random location, we restrict the movement to be either left or right one position.
More formally, each transition of this chain chooses two adjacent coordinates uniformly at random.
If swapping them results in a valid path, we do so with probability $1/2$,
and otherwise make no change.
For example, suppose the current state is $\nearrow\nearrow\searrow\nearrow\nearrow\searrow$. 
If, with probability $1/5$, we choose the second and third positions,
then the path will not change as swapping them leads to an invalid $2$-Dyck path. 
On the other hand, if (with probability $1/5$) we choose the third and fourth positions,
then the chain will move to $\nearrow\nearrow\nearrow\searrow\nearrow\searrow$ with probability $1/2$ and remain unchanged otherwise.

While this chain is not directly studied before, 
we verify that the coupling method of Wilson \cite{Wil04} still applies here.
Wilson's original argument is applied to Lattice paths or the Bernoulli-Laplacian model (the uniform distribution over \emph{all} strings with a fixed number of ups and downs).
Without too much change, the same argument implies that the adjacent move chain over $2$-Dyck paths mixes in $O(n^3\log n)$ steps.

Our main technical contribution is then to compare the adjacent move chain over $2$-Dyck paths with the flip chain over NCSTs.
We need to characterise how one adjacent move changes the NCSTs.
Consider again $W_2$ in  \Cref{exp:bad-correspondence},
and another $2$-Dyck path $W_3$ that can be reached from $W_2$ by a single adjacent move:

\def\tempmark#1#2#3{
\tikz[overlay]
\draw[fill=#3,fill opacity=0.2,draw opacity=0] (#1,#2) rectangle ++(0.4,-1.15);
}
\resizebox{0.9\linewidth}{!}{
\begin{minipage}{\linewidth}
\begin{equation*}
\tempmark{8.25}{0.7}{orange}
\tempmark{8.65}{0.7}{cyan}
\begin{aligned}
W_2&=\nearrow\nearrow\nearrow\nearrow\nearrow\nearrow\searrow\nearrow\nearrow\nearrow\searrow\searrow\nearrow\nearrow\nearrow\searrow\nearrow\searrow\nearrow\searrow\nearrow\searrow\nearrow\searrow\nearrow\nearrow\nearrow\nearrow\searrow\nearrow\searrow\nearrow\nearrow\nearrow\searrow\nearrow\searrow\nearrow\searrow,\\
W_3&=\nearrow\nearrow\nearrow\nearrow\nearrow\nearrow\searrow\nearrow\nearrow\nearrow\searrow\searrow\nearrow\nearrow\nearrow\searrow\nearrow\searrow\nearrow\nearrow\searrow\searrow\nearrow\searrow\nearrow\nearrow\nearrow\nearrow\searrow\nearrow\searrow\nearrow\nearrow\nearrow\searrow\nearrow\searrow\nearrow\searrow. 
\end{aligned}
\end{equation*}
\end{minipage}
}
\undef\tempmark
\vspace{0.1cm}

The adjacent move swaps the position $20$ and $21$. 
The corresponding NCST of $W_3$ is shown in \Cref{fig:correspondence-example}. 
Note that there are still a lot of altered edges, and in fact, one can also construct examples where $\Omega(n)$ edges get changed after an adjacent move. 
This time, however, the reader may have observed a pattern of the changes: the blue part in the NCST of $W_3$ is the same as the red part in that of $W_2$, but shifted clockwise once. 
As formally proved in \Cref{sec:characterise-adjacent}, each adjacent move corresponds to at most two edge flips, plus one shifting in a particular form (see \eqref{equ:shifting-illustration}). 


\begin{figure}[t]
\begin{tikzpicture}
\foreach \x in {0,...,13} {
    \node [shape=circle,draw=black,fill=white,scale=0.3] (A\x) at ({90-360/14*\x}:2.1) {};
    \node [shape=circle,draw=black,fill=white,scale=0.3] (B\x) at ($(A\x)+(5.4,0)$) {};
    \node [shape=circle,draw=black,fill=white,scale=0.3] (C\x) at ($(B\x)+(5.4,0)$) {};
    \node (X\x) at ({90-360/14*\x}:2.35) {$\scriptstyle\x$};
    \node (Y\x) at ($(X\x)+(5.4,0)$) {$\scriptstyle\x$};
    \node (Z\x) at ($(Y\x)+(5.4,0)$) {$\scriptstyle\x$};
}
\node () at (0,-3) {$W_1$};
\node () at (5.4,-3) {$W_2$};
\node () at (10.8,-3) {$W_3$};

\def\myApath#1#2{\path [thick] (A#1) edge node [] {} (A#2);}
\def\myBpath#1#2{\path [thick] (B#1) edge node [] {} (B#2);}
\def\myCpath#1#2{\path [thick] (C#1) edge node [] {} (C#2);}
\def\myBredpath#1#2{\path [thick,red] (B#1) edge node [] {} (B#2);}
\def\myCbluepath#1#2{\path [thick,blue] (C#1) edge node [] {} (C#2);}
\myApath{0}{13}
\myApath{1}{2}
\myApath{1}{4}
\myApath{1}{13}
\myApath{3}{4}
\myApath{4}{5}
\myApath{4}{6}
\myApath{6}{7}
\myApath{6}{8}
\myApath{9}{10}
\myApath{9}{11}
\myApath{11}{12}
\myApath{11}{13}

\myBredpath{0}{1}
\myBredpath{0}{3}
\myBpath{0}{6}
\myBpath{0}{7}
\myBpath{0}{8}
\myBredpath{2}{3}
\myBredpath{3}{4}
\myBredpath{3}{5}
\myBpath{8}{9}
\myBpath{8}{10}
\myBpath{8}{13}
\myBpath{10}{11}
\myBpath{10}{12}

\myCpath{0}{7}
\myCpath{0}{8}
\myCbluepath{1}{2}
\myCbluepath{1}{4}
\myCbluepath{3}{4}
\myCbluepath{4}{5}
\myCbluepath{4}{6}
\myCpath{6}{7}
\myCpath{8}{9}
\myCpath{8}{10}
\myCpath{8}{13}
\myCpath{10}{11}
\myCpath{10}{12}

\undef\myApath
\undef\myBpath
\undef\myCpath
\undef\myBredpath
\undef\myCbluepath
\end{tikzpicture}
\caption{NCSTs corresponding to the $2$-Dyck paths in \Cref{exp:bad-correspondence}.}
\label{fig:correspondence-example}
\end{figure}

To apply the path method for Markov chain comparison \cite{DS93},
we need to simulate an adjacent move with a sequence of flip moves, 
and to be able to recover the initial and final states of the adjacent move, given the two states of the flip move and at most polynomial amount of additional information.
The main task is to simulate the shift.
We do so by identifying a hierarchical structure of the shift, and only flip edges following a particular pattern. 
This design allows us to uniquely recover the initial and final states of the shift, as long as we know the current transition, the recursion depth, and constant amount of extra information.
This is the crux of our whole argument and is given in \Cref{sec:AM-vs-Flip}.

\Cref{thm:NCST} is shown by combining the comparison argument with the $O(n^3\log n)$ mixing time of the adjacent move chain.
The comparison has an overhead of $O(n^4)$,
which, roughly speaking, consists of $O(n)$ for the encoding, $O(n)$ for the path length, and $O(n^2)$ to account for the transition probability difference.
Finally, together with the $O(n)$ overhead resulted from spectral gaps, we obtain our mixing time bound of $O(n^8\log n)$.

\subsection{Related work}

McShine and Tetali's mixing time argument for convex triangulations \cite{MT97} is similar to our result in spirit,
which is also built on a comparison argument with Wilson's lattice path chain \cite{Wil04}.
However, in their case the starting point is $1$-Dyck path and the argument is more straightforward.
The idiosyncrasies of non-crossing spanning trees warrant a more complicated path construction.

We have mentioned that the down arrows of $2$-Dyck paths form bases of a matroid.
However it appears difficult to directly relate that matroid and related dynamics with the flip chain.
Another very similar structure is the \emph{graphic matroid},
whose bases are spanning trees of a graph.
The bases-exchange walk for matroids is known to mix very rapidly \cite{ALOV19,CGM19}. 
However, with the non-crossing restriction, even when the points are in convex position, NCSTs do \emph{not} form the bases of a matroid if we take all possible edges as the ground set. 
To see this, we give an example where the basis exchange property of matroids fails. 
Consider $4$ points $\{0,1,2,3\}$ on a circle, a tree $T_1$ consisting of edges $(0,1)$, $(1,3)$ and $(2,3)$, and another tree $T_2=\{(0,2), (1,2), (2,3)\}$. 
Remove the edge $(0,1)$ from $T_1$. 
The possible choices of the new edge to add are $T_2\setminus T_1=\{(0,2),(1,2)\}$, 
but neither of them results in a valid non-crossing spanning tree, contradicting to the basis exchange property.
Nonetheless, it is an interesting open problem to utilise techniques for the rapid mixing of graphic spanning trees to obtain better bounds in the non-crossing spanning trees context.

There has been a flurry of new tools building upon \cite{ALOV19,CGM19} for analysing Markov chains,
most notable among which is the spectral independence technique \cite{ALO20}.
However, the results utilising spectral independence or its variants mostly focus on spin systems,
and non-crossing trees are not spin systems.
The trees all have the same size, which is a constraint usually not present in spin systems. 
There are some exceptions, such as the optimal mixing time of the down-up walk of independent sets of the same size \cite{JMPV23}.  
These analyses rely heavily on the relationship between the fixed size distribution with the one without size restrictions.
There is no similar connection to be utilised for non-crossing trees.

\subsection{Open problems}

The most straightforward open problem is to establish the correct order of the mixing time for the NCST flip chain.
The $O(n^8\log n)$ bound in \Cref{thm:NCST} is unlikely to be tight.
Our starting point, the adjacent move chain for $2$-Dyck paths, has diameter $\Omega(n^2)$ by a simple potential argument,\footnote{One can also consider how the area below the path evolves and show that the mixing time is at least $\Omega(n^3/\log n)$, so Wilson's bound is almost tight.}
and thus does not mix very fast.
On the other hand, reconfiguration results \cite{AF96,BKUV24} imply that the flip chain has diameter $O(n)$. 
Therefore, we expect the flip chain to mix at least no slower than the adjacent move chain, instead of the polynomial slow-down in our current bound. 
On the other hand, there is no known lower bound on the mixing time other than the trivial $\Omega(n)$ obtained via the diameter bound. 
It would be interesting to close or shrink this gap. 

Another interesting problem is to study the flip chain when points are \emph{not necessarily} in convex position. 
Recall that the complexity of counting NCSTs with input points in general position, either exactly or approximately, is not known. 
Studying the flip chain would be helpful in resolving the approximate counting complexity.
Constructing a set of points such that the flip chain mixes torpidly (i.e., not mixing in polynomial time) would rule out direct MCMC approaches,
and often torpid-mixing instances help people construct gadgets for hardness reductions.
On the other hand, if this chain always mixes in polynomial time, then efficient approximate counting would be possible.

\section{Markov chain comparison}
\label{sec:markov-chain}

In this section we review some notions of discrete-time Markov chains over discrete state spaces.
For detailed backgrounds we refer the reader to the book \cite{LP17}.
Let $\Omega$ be a finite discrete state space and $\pi$ a distribution on $\Omega$. Let $P:\Omega\times\Omega\rightarrow\mathbb{R}_{\geq0}$ be the transition matrix of a Markov chain with stationary distribution $\pi$. We say $P$ is reversible if $\pi(x)P(x,y)=\pi(y)P(y,x)$ for any $x,y\in\Omega$. 

Given two distributions $\pi$ and $\pi'$ over $\Omega$, the total variation distance between them is defined as
\begin{equation}
d_{\mathrm{TV}}(\pi,\pi')\defeq\frac{1}{2}\sum_{x\in\Omega}\abs{\pi(x)-\pi'(x)}. 
\end{equation}

For a Markov chain $P$ with stationary distribution $\pi$, let
\begin{equation}
d(t)=\max_{x\in\Omega}\left\{d_{\mathrm{TV}}(P^t(x,\cdot),\pi)\right\}. 
\end{equation}

Then the mixing time of $P$ is defined as $t_\mathrm{mix}(P)\defeq\min\{t:d(t)\leq\frac{1}{4}\}$. 
The constant $1/4$ here is not usually important, as the error decays exponentially as $t$ increases.

Let $f,g$ be two real valued functions on $\Omega$. The Dirichlet form of $P$ with the stationary distribution $\pi$ is
\[\mathcal{E}_P(f,g)\defeq\frac{1}{2}\sum_{x,y\in\Omega}[f(x)-f(y)][g(x)-g(y)]\pi(x)P(x,y).\]
Define the variance of a function $f:\Omega\to\mathbb{R}$ by
\[
\Var{\pi}{f}:=\Ex[f^2]-(\Ex[f])^2. 
\]
Then the \emph{spectral gap} of the Markov chain $P$ is
\begin{equation}\label{equ:lambda-def}
\lambda(P):=\inf\left\{\frac{\mathcal{E}_P(f,f)}{\Var{\pi}{f}}\mid f:\Omega\to\mathbb{R},\Var{\pi}{f}\neq0\right\}. 
\end{equation}

The mixing time of the reversible Markov chain $P$ can be bounded using the inverse spectral gap (also called the \emph{relaxation time}) by (see e.g.~\cite[Theorem 12.4]{LP17})
\begin{equation}\label{equ:lambda-to-mixing}
t_{\mathrm{mix}}(P)\leq\frac{1}{\lambda(P)}\left(1+\frac{1}{2}\log\pi_{\min}\right),
\end{equation}
where $\pi_{\min}:=\min_{x\in\Omega}\pi(x)$. 
This $\pi_{\min}$ term is usually a single inverse exponential, which means the mixing time bound obtained this way typically differs from the relaxation time by a linear factor. 

The relaxation time also provides a lower bound for the mixing time. 
For any reversible Markov chain, we have (see e.g.~\cite[Theorem 12.5]{LP17})
\begin{equation}\label{equ:mixing-to-lambda}
\frac{1}{\lambda(P)}\leq 1+2t_{\mathrm{mix}}. 
\end{equation}

%
%
%
%
%
%
%
%
%

\subsection{Path method}

The path method \cite{DS93} is a way to compare two Markov chains $P$ and $\widetilde{P}$, with stationary distributions $\pi$ and $\widetilde\pi$, by simulating any transition in $\widetilde{P}$ using a series of transitions in $P$. 
This is a generalisation of the canonical path method by Jerrum and Sinclair \cite{JS89}.

For every pair of $x,y\in\Omega$, let $\Gamma_{xy}=(x=x_0,x_1,...,x_{\ell-1},x_\ell=y)$ be a path of length $\ell=|\Gamma_{xy}|$ in the state space where $P(x_{i-1},x_i)>0$ for all $i\in[\ell]$. 
The congestion ratio $B$ for a collection of paths $\Gamma=\{\Gamma_{xy}\}_{x,y\in\Omega}$ is defined as
\begin{equation}
B\defeq\max_{(x',y'):P(x',y')>0}\left(\frac{1}{\pi(x')P(x',y')}\sum_{\substack{x,y\\\Gamma_{xy}\ni(x',y')}}\widetilde{\pi}(x)\widetilde{P}(x,y)|\Gamma_{xy}|\right).
\end{equation}

\begin{theorem}[Theorem 2.1 of \cite{DS93}] \label{thm:path-method}
  Let $P$ and $\widetilde{P}$ be two reversible Markov chains. Let $B$ be the congestion ratio for paths $\Gamma=\{\Gamma_{xy}\}$ using transitions in $P$. Then for all $f:\Omega\rightarrow\mathbb{R}$,
\[\mathcal{E}_{\widetilde{P}}(f,f)\leq B\mathcal{E}_P(f,f).\]
\end{theorem}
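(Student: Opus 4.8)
The plan is to prove the inequality $\mathcal{E}_{\widetilde{P}}(f,f)\leq B\,\mathcal{E}_P(f,f)$ for each fixed $f\colon\Omega\to\mathbb{R}$ by the classical ``telescope, Cauchy--Schwarz, rearrange'' argument. First I would write out the left-hand Dirichlet form,
\[\mathcal{E}_{\widetilde{P}}(f,f)=\frac12\sum_{x,y\in\Omega}\bigl(f(x)-f(y)\bigr)^{2}\,\widetilde{\pi}(x)\widetilde{P}(x,y),\]
and observe that only pairs $(x,y)$ with $\widetilde{P}(x,y)>0$ contribute. For each such pair, fix the designated path $\Gamma_{xy}=(x=x_0,x_1,\dots,x_\ell=y)$, where $\ell=\abs{\Gamma_{xy}}$ and $P(x_{i-1},x_i)>0$ for every $i$, and telescope: $f(x)-f(y)=\sum_{i=1}^{\ell}\bigl(f(x_{i-1})-f(x_i)\bigr)$, a sum of increments across individual $P$-transitions.

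The second step is Cauchy--Schwarz against the all-ones vector, which turns this into
\[\bigl(f(x)-f(y)\bigr)^{2}\;\leq\;\abs{\Gamma_{xy}}\sum_{(x',y')\in\Gamma_{xy}}\bigl(f(x')-f(y')\bigr)^{2},\]
the sum running over the consecutive transitions along the path. Plugging this into the expansion above and then interchanging the order of summation --- collecting, for each $P$-transition $(x',y')$, all pairs $(x,y)$ whose path $\Gamma_{xy}$ traverses $(x',y')$ --- yields
\[\mathcal{E}_{\widetilde{P}}(f,f)\;\leq\;\frac12\sum_{(x',y')\,:\,P(x',y')>0}\bigl(f(x')-f(y')\bigr)^{2}\!\!\!\sum_{\substack{x,y\\ \Gamma_{xy}\ni(x',y')}}\!\!\!\widetilde{\pi}(x)\widetilde{P}(x,y)\,\abs{\Gamma_{xy}}.\]

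Finally I would insert the factor $\pi(x')P(x',y')\big/\bigl(\pi(x')P(x',y')\bigr)$ into the outer sum, so that the bracketed normalised inner sum becomes exactly the quantity bounded by $B$ in the definition of the congestion ratio; pulling $B$ out leaves $\frac{B}{2}\sum_{x',y'}\bigl(f(x')-f(y')\bigr)^{2}\pi(x')P(x',y')=B\,\mathcal{E}_P(f,f)$, as desired. I do not anticipate a real obstacle: the argument is elementary, and the only care needed is bookkeeping --- keeping the orientation of each path edge consistent when swapping the summation order (harmless, since $\bigl(f(x')-f(y')\bigr)^2$ and, under reversibility, the weight $\pi(x')P(x',y')$ are symmetric in $x',y'$), and noting that any $P$-transition appearing in some path has $\pi(x')P(x',y')>0$, so the normalisation in the definition of $B$ is legitimate.
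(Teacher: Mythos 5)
Your proof is correct, and it is the standard telescoping plus Cauchy--Schwarz argument from Diaconis and Saloff-Coste. The paper itself does not prove \Cref{thm:path-method}; it is cited directly from \cite{DS93}, and your argument matches the proof given there, so there is nothing to contrast. (One small remark: reversibility is not actually used anywhere in the Dirichlet-form comparison itself --- your closing worry about orientation is harmless but unnecessary, since the derivation goes through ordered pairs throughout and the $\tfrac12$ prefactors on both sides match up directly.)
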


Roughly speaking, the congestion bounds how much of a slowdown the path method can cause.
It gives an upper bound on how much of a bottleneck can result from our simulated transitions.


\section{Fuss-Catalan numbers and structures}
\label{sec:fuss-catalan}

This section introduces the combinatorial structures that we frequently utilise. 
These structures are closely related to the famous \emph{Fuss-Catalan number}. 

\begin{remark}
Different authors appear to index Fuss-Catalan numbers differently in the literature. 
We adopt the one used in Stanley's monograph \cite{Sta15}. 
\end{remark}

\subsection{Dyck paths} \label{sec:dyck-path}

\begin{definition}[$k$-Dyck path]
Let $k$ be a positive integer. 
A sequence $a_1,a_2,\cdots,a_{(k+1)n}\in\{+1,-k\}$ forms a $k$-\emph{Dyck path} of length $(k+1)n$, if
\[
    \forall j\in[(k+1)n], \qquad
    \sum_{i=1}^{j}a_i\geq 0, \qquad
    \text{and} \qquad 
    \sum_{i=1}^{(k+1)n}a_i=0.
\]
\end{definition}

Imagine that there is a hiking frog starting from the sea level. 
Each time it takes a step forward, it either lifts itself up by $1$ meter, or drops by $k$ meters. 
The two conditions are interpreted as (1) at any point it cannot go below the sea level, and (2) after $(k+1)n$ steps it must land on the sea level. 
This view leads to the following terminology for $k$-Dyck paths. 
We replace $+1$ with an \emph{up-step} $\nearrow$, and $-k$ with a \emph{down-step} $\searrow$. 
The partial sum 
\[
\height(j)\defeq\sum_{i=1}^{j}a_i
\] 
indicates the \emph{height} of the frog after $j$ steps. 
Below is an example of a valid $2$-Dyck path of length $21$. 
\begin{example} \label{emp:dyck-path}
$\nearrow\nearrow\nearrow\nearrow\searrow\nearrow\searrow\nearrow\nearrow\searrow\nearrow\nearrow\nearrow\nearrow\searrow\nearrow\searrow\searrow\nearrow\nearrow\searrow$ is a $2$-Dyck path. 
\end{example}

The following simple fact is useful later. 
\begin{fact} \label{fac:dyck-path-decompose}
Any non-empty $k$-Dyck path $W$ can be uniquely written in the form
\[
  W=\nearrow A_1 \nearrow A_2 \cdots \nearrow A_k \searrow B,
\]
where $A_i$'s and $B$ are (possibly empty) $k$-Dyck paths. 
\end{fact}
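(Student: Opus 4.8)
The plan is a two-stage first-return decomposition: first cut off the trailing $k$-Dyck path $B$, then peel off the $k$ up-steps $\nearrow$ one at a time via an auxiliary induction.

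Write $W=w_1w_2\cdots w_{(k+1)n}$. Since $\height(1)\ge 0$ and the first step contributes $+1$ or $-k$, we must have $w_1=\nearrow$, so $\height(1)=1$. Let $d\defeq\min\{\,j\ge 1:\height(j)=0\,\}$, which exists because $\height((k+1)n)=0$ and satisfies $d\ge 2$. From $\height(d)=0$, $\height(d-1)>0$ and $\height(d)-\height(d-1)\in\{+1,-k\}$ we read off $w_d=\searrow$ and $\height(d-1)=k$; moreover $\height(j)\ge 1$ for every $1\le j\le d-1$ by minimality of $d$. Then $B\defeq w_{d+1}\cdots w_{(k+1)n}$ has partial sums $\height(d+i)\ge 0$ and total sum $0$, so it is a $k$-Dyck path, and it remains to decompose $V\defeq w_1\cdots w_{d-1}$: a sequence of steps $+1,-k$ of net increase $k$ all of whose nonempty prefixes sum to at least $1$.

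For this I would prove, by induction on $m$, the claim: \emph{if $1\le m\le k$ and $V$ is a sequence of steps $+1,-k$ whose partial sums $h_V(0)=0,h_V(1),\dots,h_V(|V|)$ satisfy $h_V(|V|)=m$ and $h_V(j)\ge 1$ for $1\le j\le|V|-1$, then $V$ has a unique expression $V=\nearrow A_1\nearrow A_2\cdots\nearrow A_m$ with every $A_i$ a $k$-Dyck path.} Again $V$ starts with $\nearrow$, so $h_V(1)=1$. For $m=1$ one checks that $A_1\defeq w_2\cdots w_{|V|}$ is balanced with nonnegative prefix sums (relative to height $1$), hence a $k$-Dyck path, and that this cut is forced. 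For $m\ge 2$, set $q\defeq\max\{\,j:0\le j\le|V|-1,\ h_V(j)=1\,\}$; then every position in $(q,|V|]$ has height $\ge 2$, so $w_{q+1}=\nearrow$, the block $A_1\defeq w_2\cdots w_q$ is a $k$-Dyck path, and the suffix $V_1\defeq w_{q+1}\cdots w_{|V|}$, after subtracting $1$ from all its partial sums, satisfies the hypothesis with $m-1$ in place of $m$; the induction hypothesis then splits $V_1=\nearrow A_2\cdots\nearrow A_m$. Uniqueness holds because in any such expression the end of $A_1$ must equal the last position of height $1$ in $\{0,\dots,|V|-1\}$ (every later position has height $\ge 2$), which determines $q$, then $A_1$ and $V_1$, and one recurses. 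Applying the claim to the $V$ above with $m=k$, and recalling $w_d=\searrow$, yields $W=V\searrow B=\nearrow A_1\cdots\nearrow A_k\searrow B$. Uniqueness for $W$ follows in the same spirit: in any decomposition $W=\nearrow A_1'\cdots\nearrow A_k'\searrow B'$ the trailing $\searrow$ sits at the first return of $\height$ to $0$ (all earlier heights being $\ge 1$), hence at position $d$, forcing $B'=B$ and $w_1\cdots w_{d-1}=V$; the claim then forces $A_i'=A_i$.

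I expect the bookkeeping inside the inductive claim to be the only real work: one must simultaneously check that the ``last height-$1$ position'' $q$ does not reach the end of $V$, that the block before it is balanced and stays nonnegative (so that it is a genuine $k$-Dyck path), and that the suffix after it again has the shape demanded by the induction; for uniqueness one also needs that no other cut points are possible. All of this is elementary and amounts to inspecting the height function, so I do not anticipate any conceptual difficulty beyond careful indexing.
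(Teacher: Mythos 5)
Your proof is correct and follows the same decomposition the paper describes: cut $B$ at the first return to height $0$, then locate each $A_i$ by the last time the height equals $i-1$ before that return. The paper only sketches the cut points without a formal argument, whereas you supply the inductive bookkeeping and the uniqueness check, but the underlying construction is identical.
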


The decomposition can be obtained as follows. 
The $B$ part begins at the position where the height \emph{first} reaches $0$ (except the starting point), namely $\mathsf{Start}(B)=\min\{j:~\height_W(j)=0, j\geq 1\}$. 
Each $A_i$ part begins at the position one step after where the height reaches $i-1$ for the \emph{last} time before the $B$ part, 
namely $\mathsf{Start}(A_i)=1+\max\{j:~\height_W(j)=i-1, 0\leq j< \mathsf{Start}(B)\}$.  
See \Cref{fig:dyck-path-decompose} for an example of such a decomposition for a $2$-Dyck path. 

\begin{figure}[t]
\centering
\begin{tikzpicture}[xscale=0.68,yscale=0.68]
\draw[very thin, gray!20, step=1](0,0) grid (21,6);
\draw [thick] [->] (0,0)--(22,0) node[right, below] {step};
    \foreach \x in {0,...,21}
        \draw[thick] (\x,0.1)--(\x,-0.1) node[below] {$\x$};
\draw [thick] [->] (0,0)--(0,7) node[above, right] {height};
    \foreach \x in {0,...,6}
        \draw[thick] (0.1,\x)--(-0.1,\x) node[left] {$\x$};

\edef\lev{0}
\foreach \x in {0,...,20} {
    \ifthenelse{\x = 4 \OR \x = 6 \OR \x = 9 \OR \x = 14 \OR \x = 16 \OR \x = 17 \OR \x = 20}{
            \draw[thick,->] (\x,\lev)--({{\x+1}},{{-2+\lev}});
            \pgfmathparse{-2+\lev}
            \xdef\lev{\pgfmathresult}
        }{
            \draw[thick,->] (\x,\lev)--({{\x+1}},{{1+\lev}});
            \pgfmathparse{1+\lev}
            \xdef\lev{\pgfmathresult}
        }
}
\fill [black!50,fill opacity=0.2] (1,1) rectangle (10,5);
\fill [black!50,fill opacity=0.2] (11,2) rectangle (17,6);
\fill [black!50,fill opacity=0.2] (18,0) rectangle (21,3);

\node at (1.5,4.5) {\Large $A_1$};
\node at (11.5,5.5) {\Large $A_2$};
\node at (18.5,2.5) {\Large $B$};
\end{tikzpicture}
\caption{Decomposing \Cref{emp:dyck-path} by \Cref{fac:dyck-path-decompose}.}
\label{fig:dyck-path-decompose}
\end{figure}

\Cref{fac:dyck-path-decompose} immediately implies the recurrence on the number of $k$-Dyck paths of length $(k+1)n$, denoted by $C_{k,n}$: 
\begin{equation} \label{equ:catalan-fuss}
    C_{k,0}=1,~ C_{k,n}=\sum_{\substack{i_1,\cdots,i_{k+1}\in\mathbb{Z}_{\geq 0},\\\sum_{j=1}^{k+1} i_j=n-1}}C_{k,i_1}C_{k,i_2}\cdots C_{k,i_{k+1}},
    \qquad
    \implies
    \qquad
    C_{k,n}=\frac{1}{kn+1}\binom{(k+1)n}{n}.
\end{equation}
This is called the \emph{$k$-Fuss-Catalan number} \cite[Chapter 4, A14]{Sta15}, a generalisation of the Catalan number.   It is well-known that the number of $1$-Dyck paths equals the Catalan number. 

\subsection{Non-crossing spanning trees}

Non-crossing spanning trees for points in convex position have a one-to-one correspondence with $2$-Dyck paths.
It will be more convenient to consider the following alternative way to draw the trees (see~\cite[Exercise 5.46]{Sta99}).
Instead of putting points in convex position, we arrange them on a line. 
Each edge is represented by a curve, drawn above the line segment. 
A tree is \emph{non-crossing} if and only if it can be drawn this way without any two edges intersecting (except at the endpoints). 
See \Cref{fig:ncst-drawing}(a)(b) for the two drawings of the same tree.
In other words, each edge is represented by a pair of integers $(a,b)$ where $0\leq a<b\leq n$, and there does not exist any two edges $(a,b)$ and $(c,d)$ that $a<c<b<d$. 
As a result, any subtree must consist of points labeled by consecutive numbers. 
For this reason, if a subtree is spanned by $s,s+1,\cdots,t$, we use a shorthand $[s,t]$ for it. 
A subtree is empty (consists of zero edge) if $s=t$. 

\begin{fact} \label{fac:ncst-decompose}
Any $n$-edge non-empty non-crossing spanning tree $T$ on points labeled $0,1,\cdots,n$ can be uniquely decomposed into a tuple of three non-crossing trees $(T_A,T_B,T_C)$, with the edge counts summing up to $n-1$, in the following way:
\begin{itemize}
\item Let $t$ be the maximum index that there is an edge $(0,t)$. 
\item Let $s$ be the index such that, after removing the edge $(0,t)$, the subtree $[0,s]$ is connected, but $[0,s+1]$ is not. In other words, there is a gap between $s$ and $s+1$. 
\item The first tree $T_A$ is the subtree $[0,s]$. It is empty if $s=0$.
\item The second tree $T_B$ is the subtree $[s+1,t]$. It is empty if $t=s+1$. 
\item The last tree $T_C$ is the subtree $[t,n]$. It is empty if $n=t$. 
\end{itemize}
\end{fact}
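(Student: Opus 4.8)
The plan is to obtain the decomposition by deleting the edge $(0,t)$ from $T$ and reading off the three pieces from the two resulting subtrees, using the non-crossing condition together with the maximality of $t$ to pin down their vertex sets. Since $T$ is non-empty and connected, $t=\max\{j:(0,j)\in T\}$ is well defined; write $C_0$ for the component of $0$ and $C_t$ for the component of $t$ in $T\setminus\{(0,t)\}$, so that (as $(0,t)$ is the only edge between them) $C_0$ and $C_t$ partition the vertex set.

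The first key step is to show $C_0$ is an interval $\{0,1,\dots,s\}$, via a ``crossing extraction'' argument. (i) No $v\ge t$ lies in $C_0$: the $T$-path from $0$ to such a $v$ would start with an edge $(0,u)$, where $u<t$ by maximality of $t$, so the path runs from $u<t$ to $v>t$ while avoiding $t$, hence contains an edge $(p,q)$ with $1\le p<t<q$, and then $0<p<t<q$ says $(0,t)$ and $(p,q)$ cross. (ii) With $s:=\max C_0$ (so $C_0\subseteq\{0,\dots,s\}$ and $s\le t-1$), there is no gap below $s$: if $v\notin C_0$ with $0<v<s$, the $T$-path inside $C_0$ from $0$ to $s$ avoids $v$ and goes from below $v$ to above $v$, hence has an edge $(w,w')$ with $w<v<w'\le s$; since $v\in C_t$, the $T$-path inside $C_t$ from $v$ to $t$ starts inside the interval $(w,w')$ and ends at $t>w'$, so it has an edge $(p,q)$ with $w<p<w'$ and $q\notin[w,w']$, and whichever side $q$ lies on, $(p,q)$ crosses $(w,w')$. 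Hence $T_A:=[0,s]$ equals $C_0$, a non-crossing tree on $s+1$ vertices with $s$ edges.

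Next I would analyse $C_t$, whose vertex set is the interval $\{s+1,\dots,n\}$. The same extraction shows $T$ has no edge between $\{s+1,\dots,t-1\}$ and $\{t+1,\dots,n\}$ (an edge $(p,q)$ with $s+1\le p<t<q$ would satisfy $0<p<t<q$ and cross $(0,t)$). Thus $t$ is a cut vertex of $C_t$ separating these two intervals, and reattaching $t$ to each side gives $T_B:=[s+1,t]$ and $T_C:=[t,n]$; these are non-crossing trees meeting exactly in $t$, and because $C_t$ is connected and the components of $C_t\setminus\{t\}$ have no edges leaving them other than to $t$, both $T_B$ and $T_C$ are connected, so their edge sets partition $E(C_t)$. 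Counting, $n=|E(T)|=|E(C_0)|+|E(C_t)|+1=|E(T_A)|+|E(T_B)|+|E(T_C)|+1$, so the three edge counts sum to $n-1$.

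For uniqueness, the recipe above is deterministic ($t$ forced by maximality, then $s=\max C_0$ forced, then $T_A,T_B,T_C$ the spanned subtrees on $[0,s],[s+1,t],[t,n]$), and conversely from any non-crossing triple $(T_A,T_B,T_C)$ with edge counts summing to $n-1$ one rebuilds $T$ by relabelling them onto $[0,s],[s+1,t],[t,n]$ with $s=|E(T_A)|$, $t=s+1+|E(T_B)|$ and adding $(0,t)$; a short check (the only edge of this union joining $\{0,\dots,s\}$ to $\{s+1,\dots,n\}$ is $(0,t)$, which makes $t$ the maximal neighbour of $0$ and $\{0,\dots,s\}$ its component) confirms the rebuilt tree is non-crossing and decomposes back to $(T_A,T_B,T_C)$, so the decomposition is a bijection. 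The hard part is the two contiguity claims, since this is where planarity really enters; the one delicate point is ensuring the endpoints of the extracted edge lie strictly between $0$ and $t$ — which is exactly why $t$ must be the \emph{maximal} neighbour of $0$ — so that the strict chain $a<c<b<d$ from the definition of a crossing genuinely holds. The edge count and the reconstruction check are then routine.
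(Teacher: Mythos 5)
Your proof is correct. The paper states \Cref{fac:ncst-decompose} without proof, so there is no in-text argument to compare against; your crossing-extraction argument is exactly the standard way to justify it, and your two contiguity claims (that $C_0$ is the interval $[0,s]$, and that $t$ is a cut vertex of $C_t$ separating $[s+1,t-1]$ from $[t+1,n]$) are the genuine content. The one place that deserves the care you gave it is indeed making sure the extracted edge $(p,q)$ has $p>0$ in step (i); you handle this by observing that the first edge out of $0$ in $C_0$ goes to some $u<t$ (maximality of $t$), so the jump past $t$ cannot be the first edge, which forces $p\geq 1$. The reconstruction you give is slightly more than the Fact literally asks for, but it is exactly what is needed to justify the bijection with $2$-Dyck paths in \Cref{cor:path-tree-bij} and the recurrence \eqref{equ:catalan-fuss-2}, so it is a welcome addition rather than a detour.
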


Resembling \Cref{fac:dyck-path-decompose} for $2$-Dyck paths, the above decomposition not only suggests exactly the same recurrence \eqref{equ:catalan-fuss-2} for the number of $n$-edge non-crossing spanning trees, but also a \emph{one-to-one correspondence} between $2$-Dyck paths and non-crossing spanning trees. 
Compare \Cref{fig:ncst-drawing}(c) and \Cref{fig:dyck-path-decompose} for instance. 
\begin{corollary} \label{cor:path-tree-bij}
The following ruleset, when applied inductively, gives a one-to-one correspondence between $2$-Dyck paths of length $3n$ and non-crossing spanning trees containing $n$ edges.
\begin{itemize}
\item An empty path corresponds to an empty tree. 
\item If a tree $T$ is decomposed into $(T_A,T_B,T_C)$ by \Cref{fac:ncst-decompose}, with each part corresponding to paths $A$, $B$ and $C$ respectively, then $T$ corresponds to the path $\nearrow A \nearrow B \searrow C$. 
\end{itemize}
\end{corollary}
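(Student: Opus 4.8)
The plan is to prove, by induction on $n$, that the ruleset defines a map $\Phi$ which is a bijection from $2$-Dyck paths of length $3n$ onto non-crossing spanning trees with $n$ edges. The base case $n=0$ is immediate: the empty path is the unique $2$-Dyck path of length $0$, the empty tree is the unique $0$-edge non-crossing spanning tree, and $\Phi$ matches them. For the inductive step, fix $n\geq 1$ and assume that for every $m<n$ the map $\Phi$ restricts to a bijection between $2$-Dyck paths of length $3m$ and $m$-edge non-crossing spanning trees.

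First I would check that $\Phi$ is well-defined on length-$3n$ paths and lands in the right set. A non-empty path $W$ of length $3n$ is written uniquely, by \Cref{fac:dyck-path-decompose} with $k=2$, as $W=\nearrow A\nearrow B\searrow C$ where $A,B,C$ are $2$-Dyck paths of lengths $3i,3j,3k$ with $i+j+k=n-1$; by the induction hypothesis $\Phi(A),\Phi(B),\Phi(C)$ are non-crossing trees with $i,j,k$ edges. Placing $\Phi(A)$ on the vertices $[0,i]$, $\Phi(B)$ on $[i+1,i+j+1]$, $\Phi(C)$ on $[i+j+1,n]$ and adding the edge $(0,i+j+1)$ produces a graph on the $n+1$ vertices $0,1,\dots,n$ with $i+j+k+1=n$ edges. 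It is connected, since the new edge joins the $\Phi(A)$-block to the $\Phi(C)$-block while the $\Phi(B)$-block shares the vertex $i+j+1$ with the $\Phi(C)$-block, so it is a spanning tree; and it is non-crossing, since the edges of each block stay within that block's interval and the new edge $(0,i+j+1)$ is not strictly nested with any other edge (any edge reaching past $i+j+1$ lies inside $[i+j+1,n]$). Thus $\Phi$ sends length-$3n$ paths into $n$-edge non-crossing spanning trees, and by \eqref{equ:catalan-fuss} and \eqref{equ:catalan-fuss-2} the two sets both have cardinality $C_{2,n}$.

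The crux is that this gluing is precisely the inverse of the canonical decomposition in \Cref{fac:ncst-decompose}. In the tree $\Phi(W)$ just built, the largest index with an edge to $0$ is $i+j+1$ (the edges of $\Phi(A)$ incident to $0$ land in $[1,i]$, and no edge other than $(0,i+j+1)$ reaches a vertex beyond $i$), and after deleting $(0,i+j+1)$ the component of $0$ is exactly $[0,i]$ while vertex $i+1$ is separated from it; hence \Cref{fac:ncst-decompose} extracts $t=i+j+1$ and $s=i$, and the triple it returns is exactly $(\Phi(A),\Phi(B),\Phi(C))$. This yields both directions at once. For injectivity, if $\Phi(W)=\Phi(W')$ with $W=\nearrow A\nearrow B\searrow C$ and $W'=\nearrow A'\nearrow B'\searrow C'$, then by the uniqueness in \Cref{fac:ncst-decompose} we get $\Phi(A)=\Phi(A')$, $\Phi(B)=\Phi(B')$, $\Phi(C)=\Phi(C')$, so $A=A'$, $B=B'$, $C=C'$ by the induction hypothesis, and $W=W'$. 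For surjectivity, an arbitrary $n$-edge non-crossing spanning tree $T$ decomposes by \Cref{fac:ncst-decompose} into $(T_A,T_B,T_C)$ with edge counts summing to $n-1$; the induction hypothesis provides $2$-Dyck paths $A,B,C$ with $\Phi(A)=T_A$, $\Phi(B)=T_B$, $\Phi(C)=T_C$, and then $\nearrow A\nearrow B\searrow C$ is a valid $2$-Dyck path (the existence direction of \Cref{fac:dyck-path-decompose}, immediate from the height conditions) which $\Phi$ sends to $T$. Either direction alone, combined with the equal cardinalities, already closes the induction.

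I expect the only genuinely delicate point to be the structural claim in the third paragraph: that the glued tree is truly non-crossing, and that the indices $t$ and $s$ produced by \Cref{fac:ncst-decompose} come out to $i+j+1$ and $i$ respectively — that is, there is no stray edge from $0$ overshooting $i+j+1$, and no edge bridges the gap between $i$ and $i+1$ once $(0,i+j+1)$ is removed. The remaining work is bookkeeping on path lengths and edge counts together with the routine two-sided induction sketched above.
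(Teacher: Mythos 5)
Your proof is correct and takes essentially the same approach as the paper, which presents the corollary as an immediate consequence of the parallel decompositions in \Cref{fac:dyck-path-decompose} and \Cref{fac:ncst-decompose} without spelling out the induction. You have filled in the details — well-definedness, non-crossing of the glued tree, and the check that the canonical decomposition inverts the gluing — all of which check out.
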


The decomposition has the following useful property. 
\begin{proposition}\label{prop:concatenation}
Let $U$ and $V$ be two $2$-Dyck paths of length $3n_U$ and $3n_V$ respectively, 
and let $T_U$ and $T_V$ be their corresponding non-crossing spanning trees. 
Then, the non-crossing spanning tree $T_{UV}$ corresponding to the concatenated Dyck path $UV$ is a concatenation of $T_U$ and $T_V$, meaning that the subtree $[0,n_U]$ of $T_{UV}$ is $T_U$, and the subtree $[n_U,n_U+n_V]$ of $T_{UV}$ is $T_V$ (up to a shift in indexing). 
\end{proposition}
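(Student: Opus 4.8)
The plan is to prove the statement by induction on $n_U$ (the number of edges of $T_U$, equivalently $\frac{1}{3}|U|$), with $V$ arbitrary. The base case $n_U=0$ is immediate: then $U$ is the empty path, $T_U$ is the single vertex $0$, we have $UV=V$ and $T_{UV}=T_V$, so the described concatenation structure holds vacuously.

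For the inductive step assume $U$ is non-empty, and apply \Cref{fac:dyck-path-decompose} with $k=2$ to write $U=\nearrow A_1\nearrow A_2\searrow B$ with $A_1,A_2,B$ $2$-Dyck paths having $i,j,k$ edges respectively ($i+j+k=n_U-1$). Then $UV=\nearrow A_1\nearrow A_2\searrow (BV)$ and $BV$ is again a $2$-Dyck path. First I would verify that this is the \emph{canonical} decomposition of $UV$ furnished by \Cref{fac:dyck-path-decompose}: the height profile of $UV$ coincides with that of $U$ on the prefix of length $|U|$, so the first time the height returns to $0$ occurs at the same position (just after the displayed $\searrow$); hence $\mathsf{Start}$ of the trailing block of $UV$ equals $\mathsf{Start}(B)$, while $\mathsf{Start}(A_1)=1$ and $\mathsf{Start}(A_2)$ are computed from that common prefix and so are unchanged. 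By \Cref{cor:path-tree-bij} it follows that $T_{UV}$ is assembled from $(T_{A_1},T_{A_2},T_{BV})$ through \Cref{fac:ncst-decompose} with exactly the same cut indices $s=i$, $t=i+j+1$ and the same special edge $(0,t)$ as in the assembly of $T_U$ from $(T_{A_1},T_{A_2},T_B)$.

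Next I would invoke the induction hypothesis on the pair $(B,V)$, which is allowed since $k<n_U$: $T_{BV}$ is the concatenation of $T_B$ and $T_V$, so its subtree on the first $k+1$ vertices is $T_B$, its subtree on the last $n_V+1$ vertices is $T_V$, and (counting edges) every edge of $T_{BV}$ has both endpoints among its first $k+1$ vertices or both among its last $n_V+1$ vertices. Translating the local labels of $T_{BV}$ to the global labels of $T_{UV}$ — local vertex $0$ is global vertex $t$, and the vertex shared by the $T_B$-part and the $T_V$-part is global vertex $n_U$ — I would then read off two things. First, no edge of $T_{UV}$ joins $\{0,\dots,n_U-1\}$ to $\{n_U+1,\dots,n_U+n_V\}$: any edge of $T_{UV}$ lies in $T_{A_1}$, in $T_{A_2}$, is $(0,t)$, or lies in $T_{BV}$, and the first three live inside $\{0,\dots,t\}$ while an edge of $T_{BV}$ straddling the boundary vertex $n_U$ is forbidden by the hypothesis. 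Second, the subgraph of $T_{UV}$ induced on $\{0,\dots,n_U\}$ consists exactly of $T_{A_1}$ on $[0,i]$, $T_{A_2}$ on $[i+1,t]$, the edge $(0,t)$, and the $T_B$-part of $T_{BV}$ on $[t,n_U]$, glued through vertex $t$ and the edge $(0,t)$ precisely as in \Cref{fac:ncst-decompose}; since this is the defining recipe for $T_U$ (and the subgraph has $n_U$ edges on $n_U+1$ vertices, hence is a tree), it equals $T_U$. Symmetrically, the subgraph of $T_{UV}$ induced on $\{n_U,\dots,n_U+n_V\}$ sits inside $T_{BV}$ and equals its $T_V$-part, i.e.\ $T_V$ up to the index shift. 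This closes the induction.

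The routine parts are the height/position bookkeeping that pins down the canonical decomposition of $UV$, and the translation between the local indices of $T_{BV}$ and the global indices of $T_{UV}$. The one point that genuinely needs care — and the only place the non-crossing structure really enters — is ruling out an edge of $T_{UV}$ that straddles the boundary vertex $n_U$; but this is exactly the content of the induction hypothesis applied to $(B,V)$, so once the decomposition of $UV$ has been established the argument goes through without friction.
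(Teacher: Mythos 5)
Your proof is correct, and it rests on the same two ingredients as the paper's: \Cref{fac:dyck-path-decompose} to decompose $U=\nearrow A_1\nearrow A_2\searrow B$ and the observation that the canonical decomposition of $UV$ is then $\nearrow A_1\nearrow A_2\searrow(BV)$, combined with \Cref{cor:path-tree-bij}. The difference is the choice of induction variable and consequent bookkeeping. The paper inducts on the number $k$ of times $U$ returns to height zero after the first step; its base case $k=1$ (trailing block $U_C$ empty) already proves a nontrivial instance of the proposition, and the inductive step then invokes the hypothesis \emph{twice} — once at $k'=k-1$ to see that $T_{U_CV}$ concatenates $T_{U_C}$ with $T_V$, and once at $k'=1$ to see that $T_{U'(U_CV)}$ concatenates $T_{U'}=T_{\nearrow U_A\nearrow U_B\searrow}$ with $T_{U_CV}$ — so that the ``gluing'' is delegated entirely to the proposition itself with no direct index manipulation. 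You instead induct on $n_U$, apply the hypothesis once to $(B,V)$, and then carry out the gluing explicitly by translating local coordinates of $T_{BV}$ into global coordinates of $T_{UV}$ and checking that no edge straddles $n_U$. Your scheme is the more standard one and is arguably a bit easier to follow, at the cost of some index bookkeeping that the paper's recombination trick $U=U'U_C$ avoids; both are valid and of comparable length.
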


\begin{proof}
We perform an induction on $k\defeq\left|\left\{i\geq 1:\height_{U}(i)=0\right\}\right|$, the number of times that $U$ hits height zero after the first step. 

In the base case where $k=1$, the decomposition of $U$ is $U=\nearrow U_A \nearrow U_B \searrow U_C$ with $U_C$ being empty. 
Therefore, the decomposition of $UV$ is $UV=\nearrow U_A \nearrow U_B \searrow V$, which by \Cref{cor:path-tree-bij} gives a decomposition of $T_{UV}$ into $(T_{U_A},T_{U_B},T_V)$, a concatenation of $T_U$ and $T_V$. 

Suppose that this is true for any $k'<k$. 
Decompose $U=\nearrow U_A \nearrow U_B \searrow U_C=:U'U_C$. 
On one hand, the number of times that $U_C$ hits height zero is $k-1$. 
By induction hypothesis for the case $k'=k-1$, the tree $T_{U_C V}$ is a concatenation of the tree $T_{U_C}$ and $T_V$. 
On the other hand, the decomposition of $UV$ is $UV=\nearrow U_A \nearrow U_B \searrow (U_C V)=U'(U_C V)$, and note that $U'$ hits height zero once. 
By induction hypothesis for the case $k'=1$, we have that $T_{UV}=T_{U'(U_C V)}$ is a concatenation of $T_{U'}$ and $T_{U_C V}$. 
The induction hypothesis is hence true for the case $k'=k$ by putting the above two points together. 
\end{proof}

\begin{figure}[t]
\centering
\begin{minipage}{0.45\textwidth}
\centering
\begin{tikzpicture}
\foreach \x in {0,...,7} {
    \node [shape=circle,draw=black,fill=white,scale=0.75] (A\x) at ({90-360/8*\x}:3) {};
    \node () at ({90-360/8*\x}:3.4) {$\x$};
}
\path [thick] (A0) edge node [] {} (A1);
\path [thick] (A0) edge node [] {} (A2);
\path [thick] (A2) edge node [] {} (A3);
\path [thick] (A4) edge node [] {} (A5);
\path [thick] (A4) edge node [] {} (A6);
\path [thick] (A6) edge node [] {} (A7);
\path [thick] (A0) edge node [] {} (A6);
\end{tikzpicture}
\caption*{(a)}
\end{minipage}
\begin{minipage}{0.53\textwidth}
\centering
\begin{minipage}{\linewidth}
\centering
\begin{tikzpicture}
\foreach \x in {0,...,7} {
    \node [shape=circle,draw=black,fill=white,scale=0.75] (A\x) at ({\x},0) {};
    \node () at ({\x},-0.4) {$\x$};
}
\path [thick,bend left=30] (A0) edge node [] {} (A1);
\path [thick,bend left=45] (A0) edge node [] {} (A2);
\path [thick,bend left=30] (A2) edge node [] {} (A3);
\path [thick,bend left=30] (A4) edge node [] {} (A5);
\path [thick,bend left=45] (A4) edge node [] {} (A6);
\path [thick,bend left=30] (A6) edge node [] {} (A7);
\path [thick,bend left=60] (A0) edge node [] {} (A6);
\end{tikzpicture}
\caption*{(b)}
\end{minipage}

\begin{minipage}{\linewidth}
\centering
\begin{tikzpicture}
\foreach \x in {0,...,7} {
    \node [shape=circle,draw=black,fill=white,scale=0.75] (A\x) at ({\x},0) {};
    \node () at ({\x},-0.4) {$\x$};
}
\path [thick,bend left=30] (A0) edge node [] {} (A1);
\path [thick,bend left=45] (A0) edge node [] {} (A2);
\path [thick,bend left=30] (A2) edge node [] {} (A3);
\path [thick,bend left=30] (A4) edge node [] {} (A5);
\path [thick,bend left=45] (A4) edge node [] {} (A6);
\path [thick,bend left=30] (A6) edge node [] {} (A7);
\path [thick,bend left=60] (A0) edge node [] {} (A6);

\fill [black,fill opacity=0.2] (1.5,0) ellipse (1.8 and 1);
\fill [black,fill opacity=0.2] (5,0) ellipse (1.3 and 0.8);
\fill [black,fill opacity=0.2] (6.5,0) ellipse (0.8 and 0.5);

\node at (1.5,-1.3) {\Large $T_A$};
\node at (5.0,-1.1) {\Large $T_B$};
\node at (6.5,-0.8) {\Large $T_C$};
\end{tikzpicture}
\caption*{(c)}
\end{minipage}
\end{minipage}
\caption{(a)(b) Two ways of representing a non-crossing spanning tree. 
The tree is corresponded to the $2$-Dyck path in \Cref{emp:dyck-path}. 
(c) Decomposing the tree by \Cref{fac:ncst-decompose}.}
\label{fig:ncst-drawing}
\end{figure}

Inspired by the decomposition, we introduce the following terminology that will be useful later. 

\begin{definition}
In the context of \Cref{fac:ncst-decompose}: 

{
\renewcommand{\arraystretch}{1.5}
\begin{tabular}{R{0.20\linewidth} L{0.74\linewidth}}
{(pivot edge)} & the edge $(0,t)$ is called the \emph{pivot edge} of $T$; \\
{(gap)} & the gap after $s$ is called the \emph{gap} beneath the pivot edge $(0,t)$; 
the pivot edge is called the \emph{overarching edge} of the gap; \\
{(overarching edge)} & an edge $(a,b)$ is the \emph{overarching edge} of another edge $(c,d)$, if $a\leq c<d\leq b$, and there does not exist any other edge $(a',b')$ that $a\leq a'\leq c<d\leq b'\leq b$;\\
{(minimal segment)} & a subtree $[c,d]$ is a \emph{minimal segment} under an edge $(a,b)$, if the edge $(c,d)$ exists, and $(a,b)$ is the overarching of $(c,d)$; 
a subtree $[c,d]$ is an outmost minimal segment, if the edge $(c,d)$ exists, and there is no overarching edge of $(c,d)$.\\
\end{tabular}
}

\end{definition}

\begin{example}
Consider the tree in \Cref{fig:ncst-drawing}.
\begin{itemize}
    \item The pivot edge of the whole tree is $(0,6)$. The pivot edge of the subtree $[0,3]$ is $(0,2)$. 
    \item The gap beneath the edge $(0,6)$ is after $3$. The gap beneath the edge $(4,6)$ is after $5$. 
    \item The overarching edge of the edge $(0,1)$ is $(0,2)$, whose overarching edge is $(0,6)$, which does not have any overarching edge. 
    \item The minimal segments under the edge $(0,6)$ are 
    $[0,2]$, $[2,3]$ and $[4,6]$. 
    The outmost minimal segments are
    $[0,6]$ and $[6,7]$.
\end{itemize} 
\end{example}



\section{Spectral gap of the adjacent move chain}

Over the state space of $2$-Dyck paths of length $3n$, define the following \emph{adjacent move chain}: 
\begin{enumerate}
  \item[(1)] starting from the current path $X$, choose a position $i\in[3n-1]$;
  \item[(2)] let $X'$ be the same as $X$ except that the steps at positions $i$ and $i+1$ are swapped;
  \item[(3)] if $X'$ is invalid, then the chain stays at $X$. Otherwise, it moves to $X'$ with probability $1/2$ and stays at $X$ with probability $1/2$. 
\end{enumerate}
Formally, let $B(X)$ be the set of $2$-Dyck paths that can be obtained from $X$ by swapping two adjacent coordinates. 
Trivially, $|B(X)|<n$. 
The transition probability of this chain is given by
\begin{equation} \label{equ:am-chain}
P_{\mathrm{AM}}(X,X')=
\begin{cases}
\displaystyle
1-\frac{B(X)}{6n-2} & \qquad \text{if $X=X'$}; \\
\displaystyle
\frac{1}{6n-2} & \qquad \text{if $X'\in B(X)$}; \\
\displaystyle
0 & \qquad \text{otherwise}.
\end{cases}
\end{equation}
Note that $P_{\mathrm{AM}}(X,X)\geq 4/5$. 
It is easy to see that $P_\mathrm{AM}$ is irreducible and verify that $P(X,X')=P(X',X)$, so $P_\mathrm{AM}$ is reversible. 

We have the following bound on the mixing time of this chain. 
\begin{theorem} \label{thm:wilson-mixing}
$t_{\mathrm{mix}}(P_{\mathrm{AM}})=O(n^3\log n)$. 
\end{theorem}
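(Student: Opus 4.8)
The plan is to follow Wilson's approach \cite{Wil04} for bounding the spectral gap of monotone chains via a well-chosen eigenfunction (or test function) combined with a contraction estimate for a suitable coupling. Concretely, I would first set up the heights $\height_X(j)$ for $j \in \{0,1,\dots,3n\}$ as the natural coordinates of the state $X$, and consider the family of test functions $\Phi_j(X) = \height_X(j)$, or more precisely a weighted combination $\Phi(X) = \sum_j c_j \height_X(j)$ whose coefficients $c_j$ are tuned so that $\Phi$ is (approximately) an eigenfunction of the expected one-step drift. The adjacent move chain changes a single height $\height(j)$ by $\pm 1$ or leaves it unchanged at each step, so $\Ex[\height_{X'}(j) \mid X]$ is a local average of $\height_X(j-1), \height_X(j), \height_X(j+1)$ (up to the $1/2$ lazy factor and boundary/validity corrections), exactly the discrete Laplacian structure Wilson exploits. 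Choosing $c_j = \sin(\pi j /(3n))$ or the appropriate discrete sinusoid should diagonalise this drift up to the lower-order corrections coming from the $2$-Dyck constraint (the asymmetry between up-steps $+1$ and down-steps $-k$ with $k=2$).

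Next I would make precise the coupling: run two copies $X_t, Y_t$ of the chain under the identity coupling (choose the same position $i$ and the same fair coin in both copies). Wilson's key lemma says that under such a monotone coupling the distance decreases in expectation by a factor $1 - \Theta(1/n^2)$ per step when measured through the eigenfunction $\Phi$, because the only places the two copies can fail to move together are at positions where one path has a local ``peak/valley'' configuration that the other lacks, and these discrepancies are controlled by the second difference of $\Phi$. Since $\Phi$ takes values in a range of size $O(n)$ and the minimal nonzero gap in $\Phi$-values is $\Theta(1/n)$ (from the sinusoidal coefficients over $3n$ coordinates), the standard argument converting a contraction rate $1 - \Theta(1/n^2)$ into a mixing time gives $t_{\mathrm{mix}} = O(n^2 \cdot \log(\text{range}/\text{min gap})) = O(n^2 \cdot n \log n)$ — wait, that over-counts; more carefully, Wilson's theorem yields $t_{\mathrm{mix}} = O(n^2 \log(1/\eps_{\min}))$ where the relevant resolution is polynomial in $n$, hence $O(n^3 \log n)$ after accounting for the fact that two paths at distance one in Hamming metric can be at $\Phi$-distance as small as $\Theta(1/n)$ while the diameter in $\Phi$ is $\Theta(n)$, costing an extra factor of $n$ relative to the naive $n^2$.

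The main obstacle will be verifying that the $2$-Dyck validity constraint does not break the near-eigenfunction property. In the unconstrained Bernoulli--Laplace or lattice-path setting the drift is exactly linear in the heights; here the boundary behaviour at height $0$ (where a down-step of size $2$ is forbidden) and the fact that steps come in sizes $+1$ and $-2$ rather than $\pm 1$ introduce correction terms. I would argue these corrections are either (i) supported near height $0$ and handled by a Dirichlet-type boundary condition on the test function (choosing $c_j$ and the shape of $\Phi$ so that $\Phi$ is ``pinned'' appropriately, exactly as Wilson does for Dyck paths), or (ii) of smaller order and absorbable into the $\Theta(1/n^2)$ slack. I expect that, as the excerpt asserts, ``without too much change'' Wilson's argument goes through: one recasts a $2$-Dyck path as a monotone lattice path (the positions of the $n$ down-steps among $3n$ coordinates, subject to the staircase constraint), observes the adjacent-transposition chain is precisely the nearest-neighbour exclusion-type dynamics Wilson analyses, and invokes \cite[Theorem~1 or its lattice-path corollary]{Wil04} with the parameters $N = 3n$ (length) and the appropriate number of particles to read off $t_{\mathrm{mix}} = O(n^3 \log n)$. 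The bulk of the writing will be checking the hypotheses of Wilson's theorem hold verbatim and tracking the polynomial factors in $n$.
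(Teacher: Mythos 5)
Your plan is directionally correct --- the paper does follow Wilson's coupling argument --- but there is a concrete quantitative error in the heart of the calculation that a careful write-up would not survive, and your attempted patch for it does not actually hold together.

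You assert that under the monotone coupling the potential contracts by a factor $1-\Theta(1/n^{2})$ \emph{per step of the chain}. That is the contraction rate per \emph{effective update} (i.e., conditional on a particular position being selected and a swap being attempted), which is the discrete-Laplacian eigenvalue $1-\cos(\pi/(3n))\approx \pi^{2}/(2\cdot 9n^{2})$. But the chain $P_{\mathrm{AM}}$ selects only \emph{one} of the $3n-1$ positions uniformly at random per step. This slows the drift down by another factor of $\Theta(n)$, giving $\Ex[\Phi_{t+1}-\Phi_{t}\mid X_t,Y_t]\le -\Theta(1/n^{3})\,\Phi_{t}$ per Markov-chain step, which is exactly the bound $-\pi^{2}/(2n^{3})\Phi_t$ the paper uses (property (c)). The range of $\Phi$ is $O(n^{2})$ and the smallest nonzero value is $\Theta(1/n)$, so the $\log(\text{range}/\text{min gap})$ factor is $O(\log n)$, \emph{not} $O(n\log n)$. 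Plugging into the standard coupling-to-mixing conversion gives $t_{\mathrm{mix}}=O\!\left(n^{3}\cdot\log(n^{3})\right)=O(n^{3}\log n)$. Your ``wait, that over-counts'' self-correction and the ``extra factor of $n$ from the $\Phi$-resolution'' argument are not the right accounting: with a genuine per-step contraction rate of $\Theta(1/n^{2})$ you would obtain $O(n^{2}\log n)$, which is faster than the theorem claims and is not what this chain actually achieves (its diameter is already $\Omega(n^{2})$). The missing factor of $n$ comes from the single-site update schedule, not from the potential's dynamic range.

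A secondary, smaller gap: you flag the $2$-Dyck boundary constraint as a potential obstruction but leave it unresolved. The key point --- which the paper does spell out --- is that when a proposed swap is reverted in one copy for invalidity, domination between the two coupled copies is preserved and the height differences can only shrink or stay the same, so the corrections only \emph{help} the contraction and the properties (a)--(c) of Wilson's potential survive verbatim. Your response would need to check this, not just hope that it ``goes through.''
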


The proof essentially follows Wilson's coupling argument \cite{Wil04}. 
The core is to define a potential function measuring the difference between two coupled Markov chains. Recall that
\[
\height(j)\defeq\sum_{i=1}^{j}a_i
\] 
where each $a_i$ is either $1$ (for an up-step) or $-2$ for a down-step.
We say that a path $X$ \emph{dominates} another path $Y$ if $\height_{X}(i)\geq\height_{Y}(i)$ for any $i$. 
Wilson defines a potential $\Phi(X,Y)$ (only when $X$ dominates $Y$) which is a weighted sum of height differences. When we apply it to 2-Dyck paths (a subset of lattice paths on a lattice with width $n$ and height $2n$), it maintains the following properties: 
\begin{enumerate}
\item[(a)] $\Phi(X,Y)\geq 0$, and the equality is taken if and only if $X=Y$;
\item[(b)] $\sin(\pi/(3n))\leq\Phi(X,Y)\leq 4n^2/27$ for any pair of two different $2$-Dyck paths of length $3n$. 
\end{enumerate}

Consider two instances of Markov chains $X_t$ and $Y_t$ where the former dominates the latter. 
In the next step, the coupling between $X_{t+1}$ and $Y_{t+1}$ is defined as follows:
\begin{itemize}
\item choose the same positions in both chains for the potential swap;
\item if, in either $X_t$ or $Y_t$ (or both), the two positions to swap have the same arrow, then perform the transition in the other chain (or neither) as normal;
\item otherwise, choose either $\nearrow\searrow$ or $\searrow\nearrow$ with probability $1/2$, respectively, and change the two positions into the pattern chosen in both chains. For any chain resulting in an invalid Dyck path, revert the swap in the invalid chain. 
\end{itemize}
Let $\Phi_t$ be the shorthand for $\Phi(X_{t},Y_{t})$. Wilson also showed that
\begin{enumerate}
\item[(c)] Given any pair of $X_t,Y_t$ where $X_t$ dominates $Y_t$, obtaining $X_{t+1}$ and $Y_{t+1}$ in the above way. Then $X_{t+1}$ still dominates $Y_{t+1}$, and it holds that
\[
\Ex[\Phi_{t+1}-\Phi_{t}\mid X_t,Y_t]\leq -\frac{\pi^2}{2n^3} \Phi_{t}.
\]
\end{enumerate}

The coupling here differs from the one in Wilson's in the extra check of whether the swap results in a valid path.
This does break some statements in Wilson's paper, such as the mixing time lower bound, but it is easy to verify that the properties a), b), and c) still hold.

\begin{proof}[Proof of \Cref{thm:wilson-mixing}]
  Consider two instances of the adjacent move chain, whose initial states are $X_0$ and $Y_0$.
  Here $X_0$ consists of $2n$ $\nearrow$s followed by $n$ $\searrow$s, and $Y_0$ consists of $\nearrow\nearrow\searrow$ repeated $n$ times. 
  It is easy to verify that any valid $2$-Dyck path is dominated by $X_0$ and dominates $Y_0$. 
  By the above properties (b) and (c), we have
  \[
    \sin\left(\frac{\pi}{3n}\right)\Pr[\Phi_{t}>0]\leq\Ex[\Phi_{t}]\leq \frac{4n^2}{27}\exp\left\{-\frac{\pi^2t}{2n^3}\right\}.
  \]
  Therefore, after $t>(2/\pi^2)n^3\log(16n^2/27/\sin(\pi/(3n)))=\Theta(n^3\log n)$ steps, the probability that $\Phi_t$ is non-zero,
  and hence that $X_t$ and $Y_t$ do not coalesce yet, is at most $1/4$. 
\end{proof}

\begin{corollary} \label{cor:wilson-spectral-gap}
  $1/\lambda(P_{\mathrm{AM}})=O(n^3\log n)$.
\end{corollary}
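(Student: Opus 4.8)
The plan is to obtain the spectral gap bound as an immediate consequence of the mixing time bound just proved in \Cref{thm:wilson-mixing}, via the standard relaxation-time inequality \eqref{equ:mixing-to-lambda}. Since $P_{\mathrm{AM}}$ is reversible (as observed right after \eqref{equ:am-chain}), \eqref{equ:mixing-to-lambda} gives $1/\lambda(P_{\mathrm{AM}}) \leq 1 + 2\,t_{\mathrm{mix}}(P_{\mathrm{AM}})$, and substituting the bound $t_{\mathrm{mix}}(P_{\mathrm{AM}}) = O(n^3\log n)$ from \Cref{thm:wilson-mixing} yields $1/\lambda(P_{\mathrm{AM}}) = O(n^3\log n)$, as claimed.

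The only point deserving a sentence of care is that \eqref{equ:mixing-to-lambda}, as cited from \cite[Theorem 12.5]{LP17}, is stated for the \emph{absolute} spectral gap $\gamma_\star = 1 - \max\{|\theta| : \theta$ an eigenvalue of $P_{\mathrm{AM}}$, $\theta \neq 1\}$, whereas $\lambda(P_{\mathrm{AM}})$ as defined in \eqref{equ:lambda-def} is the one-sided gap $1 - \theta_2$ to the second-largest eigenvalue. These two quantities coincide here because $P_{\mathrm{AM}}$ has only nonnegative eigenvalues: since $P_{\mathrm{AM}}(X,X) \geq 4/5 \geq 1/2$ for every state $X$, one can write $P_{\mathrm{AM}} = \frac{1}{2}(I + Q)$ for a stochastic matrix $Q := 2P_{\mathrm{AM}} - I$, so every eigenvalue of $P_{\mathrm{AM}}$ lies in $[0,1]$, whence $\gamma_\star = \lambda(P_{\mathrm{AM}})$. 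With this identification, the inequality above applies directly.

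I do not anticipate any real obstacle: all the work is contained in \Cref{thm:wilson-mixing}, and what remains is the routine conversion from mixing time to relaxation time together with the laziness observation $P_{\mathrm{AM}}(X,X) \geq 4/5$ needed to pass between the two notions of spectral gap.
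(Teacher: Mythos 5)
Your proposal is correct and takes exactly the same route as the paper: apply \eqref{equ:mixing-to-lambda} to the mixing-time bound of \Cref{thm:wilson-mixing}. The extra observation about laziness ($P_{\mathrm{AM}}(X,X)\geq 4/5$) ensuring that the one-sided spectral gap $\lambda$ agrees with the absolute spectral gap in \cite[Theorem 12.5]{LP17} is a genuine bit of care that the paper's one-line proof leaves implicit, and it is handled correctly.
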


\begin{proof}
  This follows immediately from \Cref{thm:wilson-mixing} and \eqref{equ:mixing-to-lambda}.
\end{proof}

\section{Adjacent move versus flip move} \label{sec:AM-vs-Flip}

\tikzset{
ellipsebubble/.pic = {
    \draw[very thick] (0.5,0) ellipse (0.5 and 0.25);
}
}

\tikzset{
shift/.pic = {
\pic at (1,0) {ellipsebubble};
\pic at (4,0) {ellipsebubble};

\draw[very thick] (0,0) .. controls (0.5,0.75) and (1.5,0.75) .. (2,0);
\draw[very thick] (4,0) .. controls (4.5,0.75) and (5.5,0.75) .. (6,0);

\path [->,very thick] (2.5,0) edge (3.5,0);
}
}

%
%

\tikzset{
flip2/.pic = {
\pic at (0,0) {ellipsebubble};
\pic at (1,0) {ellipsebubble};
\pic at (3,0) {ellipsebubble};
\pic at (5,0) {ellipsebubble};

\draw[very thick] (0,0) arc (180:0:3 and 1.5);
\draw[very thick] (1,0) arc (180:0:1 and 0.75);

\node at (0.5,0) {$A_1$};
\node at (1.5,0) {$A_2$};
\node at (3.5,0) {$B_1$};
\node at (5.5,0) {$B_2$};
\node at (3,1.7) {$a$};
\node at (2,1) {$e$};
}
}

\tikzset{
flip3/.pic = {
\pic at (0,0) {ellipsebubble};
\pic at (2,0) {ellipsebubble};
\pic at (3,0) {ellipsebubble};
\pic at (5,0) {ellipsebubble};

\draw[very thick] (0,0) arc (180:0:3 and 1.5);
\draw[very thick] (1,0) arc (180:0:1 and 0.75);

\node at (0.5,0) {$A_1$};
\node at (2.5,0) {$A_2$};
\node at (3.5,0) {$B_1$};
\node at (5.5,0) {$B_2$};
}
}

\tikzset{
flip4/.pic = {
\pic at (0,0) {ellipsebubble};
\pic at (2,0) {ellipsebubble};
\pic at (3,0) {ellipsebubble};
\pic at (5,0) {ellipsebubble};

\draw[very thick] (0,0) arc (180:0:3 and 1.5);
\draw[very thick] (0,0) arc (180:0:1.5 and 0.75);

\node at (0.5,0) {$A_1$};
\node at (2.5,0) {$A_2$};
\node at (3.5,0) {$B_1$};
\node at (5.5,0) {$B_2$};
}
}

\tikzset{
flip5/.pic = {
\pic at (0,0) {ellipsebubble};
\pic at (2,0) {ellipsebubble};
\pic at (3,0) {ellipsebubble};
\pic at (5,0) {ellipsebubble};

\draw[opacity=0] (0,0) arc (180:0:3 and 1.5);
\draw[very thick] (3,0) arc (180:0:1.5 and 0.75);
\draw[very thick] (0,0) arc (180:0:1.5 and 0.75);

\node at (0.5,0) {$A_1$};
\node at (2.5,0) {$A_2$};
\node at (3.5,0) {$B_1$};
\node at (5.5,0) {$B_2$};
}
}

\tikzset{
flip6/.pic = {
\pic at (0,0) {ellipsebubble};
\pic at (2,0) {ellipsebubble};
\pic at (3,0) {ellipsebubble};
\pic at (5,0) {ellipsebubble};

\draw[very thick] (0,0) arc (180:0:3 and 1.5);
\draw[very thick] (3,0) arc (180:0:1 and 0.75);

\node at (0.5,0) {$A_1$};
\node at (2.5,0) {$A_2$};
\node at (3.5,0) {$A_3$};
\node at (5.5,0) {$B$};
\node at (3,1.7) {$a$};
\node at (4,1) {$e$};
}
}

\tikzset{
flip7/.pic = {
\pic at (0,0) {ellipsebubble};
\pic at (2,0) {ellipsebubble};
\pic at (3,0) {ellipsebubble};
\pic at (5,0) {ellipsebubble};

\draw[very thick] (0,0) arc (180:0:3 and 1.5);
\draw[very thick] (1,0) arc (180:0:1 and 0.75);

\node at (0.5,0) {$A_1$};
\node at (2.5,0) {$A_2$};
\node at (3.5,0) {$A_3$};
\node at (5.5,0) {$B$};
}
}

\tikzset{
flip8/.pic = {
\pic at (0,0) {ellipsebubble};
\pic at (1,0) {ellipsebubble};
\pic at (3,0) {ellipsebubble};
\pic at (5,0) {ellipsebubble};

\draw[very thick] (0,0) arc (180:0:3 and 1.5);
\draw[very thick] (1,0) arc (180:0:1 and 0.75);

\node at (0.5,0) {$A_1$};
\node at (1.5,0) {$A_2$};
\node at (3.5,0) {$A_3$};
\node at (5.5,0) {$B$};
}
}

\tikzset{
flip9/.pic = {
\pic at (0,0) {ellipsebubble};
\pic at (1,0) {ellipsebubble};
\pic at (3,0) {ellipsebubble};
\pic at (5,0) {ellipsebubble};

\draw[very thick] (0,0) arc (180:0:3 and 1.5);
\draw[very thick] (1,0) arc (180:0:1.5 and 0.75);

\node at (0.5,0) {$A_1$};
\node at (1.5,0) {$A_2$};
\node at (3.5,0) {$A_3$};
\node at (5.5,0) {$B$};
}
}

In this section we analyse the mixing time of the flip chain by comparing it with the adjacent move chain and using \Cref{cor:wilson-spectral-gap}.
The flip chain $P_{\mathrm{FM}}$ is a random walk on non-crossing spanning trees (or non-crossing trees),
where in each step we remove an edge uniformly at random, and add back a non-crossing edge between the two components.
Such a move is called an (edge) flip.
Formally, given two non-crossing spanning trees $S$ and $T$, the probability of moving from $S$ to $T$ is
\begin{equation} \label{equ:fm-chain}
P_{\mathrm{FM}}(S,T)=
\begin{cases}
\displaystyle
\sum_{T':~|S\cap T'|=n-1}\frac{1}{n\delta(S,T')} & \qquad \text{if $S=T$}; \\
\displaystyle
\frac{1}{n\delta(S,T)} & \qquad \text{if $|S\cap T|=n-1$}; \\
\displaystyle
0 & \qquad \text{otherwise},
\end{cases}
\end{equation}
where $\delta(S,T)$ is the number of edges that one can add to $S\cap T$ to make it a non-crossing spanning tree. 
Note that 
\begin{align}  \label{eqn:fm-transition-bound}
  P_{\mathrm{FM}}(S,S)\geq 1/n,\quad\text{ and }P_{\mathrm{FM}}(S,T)\geq \Omega(n^{-3})\text{ if it is non-zero.}
\end{align}

It is easy to see that $P_\mathrm{FM}$ is irreducible. Note when $|X\cap Y|=n-1$, $P(X,Y)=\frac{1}{n\delta(S,T)}=\frac{1}{n\delta(T,S)}=P(Y,X)$, and it is trivial to see that $P(X,Y)=P(Y,X)$ in the other two cases, so $P_\mathrm{FM}$ is symmetric, thus is reversible. 

\subsection{Characterise the adjacent move via flip moves}
\label{sec:characterise-adjacent}

A single adjacent move on non-crossing spanning trees still requires altering an $\Omega(n)$ number of edges. 
Yet, we will show that it consists of a constant number of flip moves, plus one subtree \emph{shifting}, taking the form of 
\begin{equation} \label{equ:shifting-illustration}
\text{moving from}\quad
\begin{tikzpicture}[scale=0.75,baseline=-.5*(height("$+$")-depth("$+$"))]
\draw[very thick] (0,-0.15) arc (180:0:1 and 0.6);
\draw[very thick] (0.75,-0.15) ellipse (0.75 and 0.25);
\end{tikzpicture}
\quad\text{to}\quad
\begin{tikzpicture}[scale=0.75,baseline=-.5*(height("$+$")-depth("$+$"))]
\draw[very thick] (-0.5,-0.15) arc (180:0:1 and 0.6);
\draw[very thick] (0.75,-0.15) ellipse (0.75 and 0.25);
\end{tikzpicture}
\quad\text{or vice versa,}
\end{equation}
where the bubble does not change.
Note that the overarching edge of the subtree remains unchanged. 
This is formally defined as follows, using the non-crossing tree decomposition. 
\begin{definition}[shifting] \label{def:shifting}
Let $T$ be a non-crossing spanning tree of size $m$ decomposing into $(T_A,T_B,T_C)$ by \Cref{fac:ncst-decompose}, where $T_B$ and $T_C$ are empty.
Let $T'$ be another non-crossing spanning tree of size $m$ decomposing into $(T'_A,T'_B,T'_C)$, where both $T'_A$ and $T'_C$ are empty, and $T'_B=T_A$. 
A \emph{shifting} is the transformation from $T$ to $T'$ (right shifting) or from $T'$ to $T$ (left shifting). 
\end{definition}

This particular form of shifting may look strange at first sight.
A more intuitive version would be just moving a subtree left or right, without caring about the overarching edge or the other side of the gap.
However, as it will become clear later when we bound the congestion of the paths,
it is this form of shifting that allows an efficient encoding.

\begin{figure}[p]
\centering
\begin{minipage}{0.49\linewidth}
\centering
\begin{tikzpicture}[xscale=0.73,yscale=0.73]
\draw[very thin, gray!20, step=1](0,0) grid (10,4);
\draw [thick] [-] (0,0)--(10,0);
    \foreach \x in {0,...,10}
        \draw[thick] (\x,0.1)--(\x,-0.1);

\draw[thick,->] (0,0)--(1,1);
\draw[thick,->] (2,1)--(3,2);
\draw[thick,blue,->] (4,2)--(5,3);
\draw[thick,blue,->] (5,3)--(6,1);
\draw[thick,->] (7,1)--(8,2);
\draw[thick,->] (9,2)--(10,0);
\draw[thick,red,->] (4,2)--(5,0);
\draw[thick,red,->] (5,0)--(6,1);

\fill [black!50,fill opacity=0.2] (1,1) rectangle (2,2);
\fill [black!50,fill opacity=0.2] (3,2) rectangle (4,3);
\fill [black!50,fill opacity=0.2] (6,1) rectangle (7,2);
\fill [black!50,fill opacity=0.2] (8,2) rectangle (9,3);

\node at (1.5,1.5) {\Large $A_1$};
\node at (3.5,2.5) {\Large $A_2$};
\node at (6.5,1.5) {\Large $B_1$};
\node at (8.5,2.5) {\Large $B_2$};

\node at (5,-0.4) {$x$};
\node at (6,-0.4) {$y$};
\node at (0,-0.4) {$x'$};
\node at (10,-0.4) {$y'$};
\end{tikzpicture}
\caption*{(1a)}
\end{minipage}
\begin{minipage}{0.49\linewidth}
\centering
\begin{tikzpicture}[xscale=0.73,yscale=0.73]
\draw[very thin, gray!20, step=1](11,0) grid (21,4);
\draw [thick] [-] (11,0)--(21,0);
    \foreach \x in {11,...,21}
        \draw[thick] (\x,0.1)--(\x,-0.1);

\draw[thick,->] (11,0)--(12,1);
\draw[thick,->] (13,1)--(14,2);
\draw[thick,->] (15,2)--(16,3);
\draw[thick,blue,->] (17,3)--(18,4);
\draw[thick,blue,->] (18,4)--(19,2);
\draw[thick,->] (20,2)--(21,0);
\draw[thick,red,->] (17,3)--(18,1);
\draw[thick,red,->] (18,1)--(19,2);

\fill [black!50,fill opacity=0.2] (12,1) rectangle (13,2);
\fill [black!50,fill opacity=0.2] (14,2) rectangle (15,3);
\fill [black!50,fill opacity=0.2] (16,3) rectangle (17,4);
\fill [black!50,fill opacity=0.2] (19,2) rectangle (20,3);

\node at (12.5,1.5) {\Large $A_1$};
\node at (14.5,2.5) {\Large $A_2$};
\node at (16.5,3.5) {\Large $A_3$};
\node at (19.5,2.5) {\Large $B$};

\node at (18,-0.4) {$x$};
\node at (19,-0.4) {$y$};
\node at (11,-0.4) {$x'$};
\node at (21,-0.4) {$y'$};
\end{tikzpicture}
\caption*{(2a)}
\end{minipage}

\vspace{20pt}

\begin{minipage}{0.49\linewidth}
\centering
\begin{tikzpicture}
\pic at (0,12) {flip2};
\pic at (0,8) {flip3};
\pic at (0,4) {flip4};
\pic at (0,0) {flip5};

\path [->,very thick] (3,11) edge node [midway, right] {\large Shift sequence} (3,10);
\path [->,very thick] (3,7) edge node [midway, right] {\large $M_1$} (3,6);
\path [->,very thick] (3,3) edge node [midway, right] {\large $M_2$} (3,2);
\end{tikzpicture}
\caption*{(1b)}
\end{minipage}
\begin{minipage}{0.49\linewidth}
\centering
\begin{tikzpicture}
\pic at (0,12) {flip6};
\pic at (0,8) {flip7};
\pic at (0,4) {flip8};
\pic at (0,0) {flip9};

\path [->,very thick] (3,11) edge node [midway, right] {\large $M_3$} (3,10);
\path [->,very thick] (3,7) edge node [midway, right] {\large Shift sequence} (3,6);
\path [->,very thick] (3,3) edge node [midway, right] {\large $M_4$} (3,2);
\end{tikzpicture}
\caption*{(2b)}
\end{minipage}
\caption{($\ast$a) The $2$-Dyck path representations of $\tilde{I}$. In the adjacent move, the blue part is changed into red. 
($\ast$b) The corresponding flip move sequences. 
(1$\ast$) Corresponding to \textbf{Type 1}. 
(2$\ast$) Corresponding to \textbf{Type 2}.}
\label{fig:FM-sequences}
\end{figure}
\afterpage{\clearpage}

Next is the characterisation of an adjacent move via flip moves and shifting. 
\begin{lemma} \label{lem:AM-simulation}
Let $I\to F$ be a transition in the adjacent move $P_{\mathrm{AM}}$ such that $I\neq F$. 
This transition can be simulated from $T_I$ to $T_F$ by at most two edge flippings in $P_{\mathrm{FM}}$, plus at most one shifting. 
\end{lemma}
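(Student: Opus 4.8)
The plan is to induct on the length of the $2$-Dyck path $I$, using the recursive decompositions \Cref{fac:dyck-path-decompose} and \Cref{fac:ncst-decompose} together with \Cref{prop:concatenation} to confine the effect of the adjacent move to a small sub-block, and then to run a finite case analysis on the resulting base configurations, each handled exactly as drawn in \Cref{fig:FM-sequences}.

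Fix the transition $I\to F$ with $I\neq F$. Since the move swaps two adjacent positions carrying different symbols, it replaces a factor $\searrow\nearrow$ of $I$ by $\nearrow\searrow$ or vice versa; as each edge flip is an involution (the chain $P_{\mathrm{FM}}$ is symmetric) and left- and right-shiftings are mutual inverses, it suffices to treat one of the two directions, say $\searrow\nearrow\to\nearrow\searrow$. In this direction one checks that whenever the two swapped positions (and the position just before them) lie strictly inside one of the three parts of the decomposition $I=\nearrow A_1\nearrow A_2\searrow B$, the same adjacent move is a valid move of that standalone part, so by \Cref{cor:path-tree-bij} and \Cref{prop:concatenation} exactly the corresponding subtree of $T_I$ (one of $[0,s]$, $[s+1,t]$, $[t,n]$) changes while the rest of $T_I$ is untouched. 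Moreover, any edge flip, and any shifting in the sense of \Cref{def:shifting}, performed inside a contiguous subtree $[c,d]$ extends to a flip (resp.\ shifting) of the whole tree: an edge with both endpoints in $[c,d]$ never crosses an edge outside $[c,d]$, so replacing the non-crossing spanning tree on $[c,d]$ by another one and keeping the rest intact again yields a valid non-crossing spanning tree. Applying the induction hypothesis to that shorter path then settles this case.

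It remains to treat the base configurations, where the swap touches one of the three structural symbols of $I=\nearrow A_1\nearrow A_2\searrow B$. Using that a non-empty $2$-Dyck path begins with $\nearrow$ and ends with $\searrow$, one sees that almost all such swaps either pair two equal symbols (no transition) or would make a standalone height negative (the move is rejected), leaving, up to the left/right symmetry, exactly the two shapes drawn as \textbf{Type 1} and \textbf{Type 2} in \Cref{fig:FM-sequences}: in one the $\nearrow$ passed over is the second structural up-step (so $A_1\neq\emptyset$), in the other it is the first symbol of $B$ (so the moved $\searrow$ is the top-level down-step and $B\neq\emptyset$). For each of these we pass to the trees via \Cref{cor:path-tree-bij}, pick out the constantly many edges that actually move, and exhibit the transformation $T_I\to T_F$ as in \Cref{fig:FM-sequences}: for \textbf{Type 1} a shift followed by the two flips $M_1$ and $M_2$, and for \textbf{Type 2} a flip $M_3$, then a shift, then a flip $M_4$. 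For each intermediate object one checks that it is a genuine non-crossing spanning tree, that each claimed flip deletes exactly one edge and reinserts exactly one non-crossing edge between the two resulting components, and, most importantly, that the non-flip step is a shifting of the precise form of \eqref{equ:shifting-illustration} — the overarching edge of the moved subtree is left in place, and it is the far endpoint of the gap, not the subtree, that slides.

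This last point is where the real work lies: one must split each base move so that its non-local component is \emph{exactly} a shifting as in \Cref{def:shifting}. A naive ``translate the subtree by one'' would fail to preserve the overarching edge, and it is precisely the form of \eqref{equ:shifting-illustration} that later makes a shift compactly (recursively) encodable when bounding the congestion in \Cref{sec:AM-vs-Flip}; so care is needed in choosing the intermediate trees and in verifying that at most two flips are ever required, which is exactly what \Cref{fig:FM-sequences} records.
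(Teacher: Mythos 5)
Your proof takes a genuinely different route from the paper's. The paper never recurses on the top-level decomposition: it reads the swap position $(x,y)$ off $I$, picks out a window $(x',y')$ via the height criterion $\height_I(y')\in\{\height_I(y)-1,\height_I(y)-2\}$, and observes directly that the sub-path on that window has a fixed shape (Type 1 or Type 2) containing all the action, with nothing outside $(x',y')$ touched. You instead induct on path length using the recursion $I=\nearrow A_1\nearrow A_2\searrow B$: if the swap lives inside a block, reduce to that block; otherwise you fall into one of two base shapes identified with the figure's Type 1 and Type 2. Both routes end in the same place and lean on the same figure for the explicit flip/shift sequences.

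The observation carrying your reduction step is hidden in the choice of direction $\searrow\nearrow\to\nearrow\searrow$: this swap raises the height profile (only $\height(i)$ changes, by $+3$), so it is automatically valid as a move of the standalone block even though the block's heights sit $1$ or $2$ lower than those of $I$, and the first return to zero and the last height-$1$ position before it are unchanged; hence the decomposition of $I'$ really is $\nearrow A_1'\nearrow A_2\searrow B$ and only one block's subtree moves. None of this holds for $\nearrow\searrow\to\searrow\nearrow$ — there a swap valid in $I$ can push the standalone block's height to $-1$ and move the first return to zero inward, changing the pivot edge and not just the block — so the symmetry step (working back through $F\to I$) is doing genuine work, not just fixing notation, and deserves to be stated rather than left as ``one checks''. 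The parenthetical ``(and the position just before them)'' suggests you sensed something delicate here but does not say what. The remaining deferrals to \Cref{fig:FM-sequences} cover the same verifications the paper carries out for $M_1$ through $M_4$ and the shift sequence, so what is left is real but is the same work the paper's proof also does.
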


There are two cases in this lemma, depending on whether one or two steps the first time the $2$-Dyck path goes below the end of the adjacent move,
which we call Type-1 and Type-2 moves.
See \Cref{fig:FM-sequences} (1a) and (2a) for illustrations of the two types of transitions.
The flip moves to simulate the adjacent move are also intuitive. 
See the illustrations in \Cref{fig:FM-sequences} (1b) and (2b).

\begin{proof}
Without loss of generality, assume $F$ is reached from $I$ with an adjacent move to the left, so $F=(I\setminus\{y\})\cup\{x\}$ where $y=x+1$. 
Let $\tilde{I}$ be the sub-Dyck-path of $I$ from $x'$ to $y'$, obtained in the following way. 
\begin{itemize}
\item $y'>y$ is the minimum $y'$ such that $\height_I(y')=\height_I(y)-1$ or $\height_I(y')=\height_I(y)-2$. 
\item $x'<x$ is the maximum $x'$ such that $\height_I(x')=\height_I(y')$. 
\end{itemize}
Note that such $y'$ must exist, because the down-step only changes the height by $-2$. 
The point here is that we want to find $x'$ and $y'$ so that our transition can be run only on the sub-Dyck path (or subtree) made by the interval $(x',y')$ without changing anything on the rest of the structure. 

We call the transition $I\to F$ \textbf{Type 1} if $\height_I(y')=\height_I(y)-1$, and \textbf{Type 2} if $\height_I(y')=\height_I(y)-2$. 
See \Cref{fig:FM-sequences} (1a) and (2a) for examples. 
We will decompose our sub-Dyck path again into blocks of unaffected sub-sub-Dyck paths ($A_i$'s, $B_i$'s) which do not change in their interior, and remaining arrows which matter in our transition. 

For a \textbf{Type 1} transition,
the sub-Dyck-path $\tilde{I}$ can be written as $\nearrow A_1 \nearrow A_2 \nearrowblue \searrowblue B_1 \nearrow B_2 \searrow$, 
which becomes $\tilde{F}=\nearrow A_1 \nearrow A_2 \searrowred \nearrowred B_1 \nearrow B_2 \searrow$ after the adjacent move. 
See \Cref{fig:FM-sequences} (1a). 
In $T_{\tilde{I}}$, we find the subtrees $T_{A_1},T_{A_2},T_{B_1}$ and $T_{B_2}$ that correspond to $A_1,A_2,B_1$ and $B_2$ respectively. 
To see this, first notice that $A_1 \nearrow A_2 \nearrowblue \searrowblue B_1$ is the ``$A_1$ part'' in \Cref{fac:dyck-path-decompose}, and $B_2$ is the ``$A_2$ part'' in \Cref{fac:dyck-path-decompose}.
The ``$B$ part'' is after $y'$ and is not relevant here, so we do not draw it out.
By \Cref{prop:concatenation}, we further decompose the former into $A_1$ and $\nearrow A_2 \nearrowblue \searrowblue B_1$.
Notice that $A_1$ may not be a minimal segment here.
Then we use \Cref{fac:dyck-path-decompose} on $\nearrow A_2 \nearrowblue \searrowblue B_1$ to further decompose it.

There are two distinguished edges $a$ and $e$ in the corresponding tree as labelled in \Cref{fig:FM-sequences} (1a). 
The edge $a=(a_1,a_2)$ is the pivot edge of $T_{\tilde{I}}$. 
Denote by $g_a,g_a+1$ the corresponding gap beneath $a$. 
The edge $e=(p,g_e+1)$ has the overarching edge $a$. 
Denote by $g_e,g_e+1$ the corresponding gap beneath $e$. 
We can alternatively say that $e$ is the edge between $A_2$ and $B_1$, and $a$ is its overarching edge. 
Therefore, the subtree $T_{A_1}$ is $[a_1,p]$, 
the subtree $T_{A_2}$ is $[p,g_e]$, 
the subtree $T_{B_1}$ is $[g_e+1,g_a]$, 
and the subtree $T_{B_2}$ is $[g_a+1,a_2]$. 
In $T_{\tilde{F}}$, the three subtrees $T_{A_1},T_{B_1},T_{B_2}$ remain untouched, while $T_{A_2}$ starts at position $p+1$ instead of $p$. 
The two edges $a$ and $e$ are moved to new positions, too. 

The above change can be simulated as below by using edge flippings and a shifting. 
See \Cref{fig:FM-sequences}(1b).
\begin{itemize}
\item[($S_1$)] The position of $e$ together with $T_{A_2}$ meets the prerequisite for a right shifting, we perform it so long as $T_{A_2}$ contains at least one edge. 
\item[($M_1$)] Now the subtree $T_{A_2}$ has been shifted right. 
Let $M_1$ move $e$ from $(p,g_e+1)$ to $(a_1,g_e+1)$. 
Since the right endpoint is still connected to the subtree induced by $[p+1,g_a]$, the result is still a tree, and since there is no edge other than $a$ and $e$ going out of $T_{A_1}$, the result is still non-crossing. 
\item[($M_2$)] Let $M_2$ move $a$ from $(a_1,a_2)$ to $(g_e+1,a_2)$. 
Since the left endpoint is still connected to the subtree induced by $[a_1,g_a]$, the result is still a tree, and since there are no edges coming out from beneath the current edge $e$, the result is still non-crossing. 
\end{itemize}

For a \textbf{Type 2} transition, 
the sub-Dyck-path $\tilde{I}$ can be written as $\nearrow A_1 \nearrow A_2 \nearrow A_3 \nearrowblue \searrowblue B \searrow$, 
which becomes $\tilde{F}=\nearrow A_1 \nearrow A_2 \nearrow A_3 \searrowred \nearrowred B \searrow$ after the adjacent move. 
See \Cref{fig:FM-sequences}(2a). 
Again, in $T_{\tilde{I}}$, we find the corresponding subtrees $T_{A_1},T_{A_2},T_{A_3}$ and $T_{B}$. 
They are joined by two edges $a$ and $e$. 
The edge $a=(a_1,a_2)$ is the pivot edge of $T_{\tilde{I}}$. 
Denote by $g_a,g_a+1$ the corresponding gap beneath $a$. 
The edge $e=(q,g_e+1)$ has the overarching edge $a$. 
Denote by $g_e,g_e+1$ the corresponding gap beneath $e$. 
Therefore, the subtree $T_{A_1}$ is $[a_1,g_a]$, 
the subtree $T_{A_2}$ is $[g_a+1,q]$, 
the subtree $T_{A_3}$ is $[q,g_e]$, 
and the subtree $T_{B}$ is $[g_e+1,a_2]$. 
In $T_{\tilde{F}}$, the three subtrees $T_{A_1},T_{A_3},T_{B}$ remain untouched, while $T_{A_2}$ starts at position $g_a$ instead of $g_a+1$. 
The edge $e$ is moved to a new position. 

The above change can be simulated as below by using edge flippings and a shifting. 
See \Cref{fig:FM-sequences}(2b).
\begin{itemize}
\item[($M_3$)] Let $M_3$ move $e$ from $(q,g_e+1)$ to $(g_a,q)$. 
Since $e$ is still has exatly one endpoint connected to the subtree induced by $[g_a+1,g_e]$, the result is still a tree, and since there is no edge other than $e$ coming out from $A_2$, the result is still non-crossing. 
\item[($S_2$)] The position of $e$ together with $T_{A_2}$ now meets the prerequisite for a left shifting, we perform it so long as $T_{A_2}$ contains at least one edge. 
\item[($M_4$)] Let $M_4$ move $e$ from $(g_a,q)$ to $(g_a,g_e)$. Since there is no edge other than $e$ coming out from the subtree $T_{A_3}$, the result is still non-crossing. 
\end{itemize}

These validate the lemma.
\end{proof}

The next step is to resolve the shifting, by designing a sequence of flippings that simulates it. 
We call this sequence of flip moves a \emph{shift sequence}. 
The design of the shift sequence is the trickiest part of the whole comparison argument, because we have to be able to recover the initial and final states by just looking at a transition with some succinct extra encoding. 
We only need to deal with right shifting, as for left shifting we just reverse the sequence. 

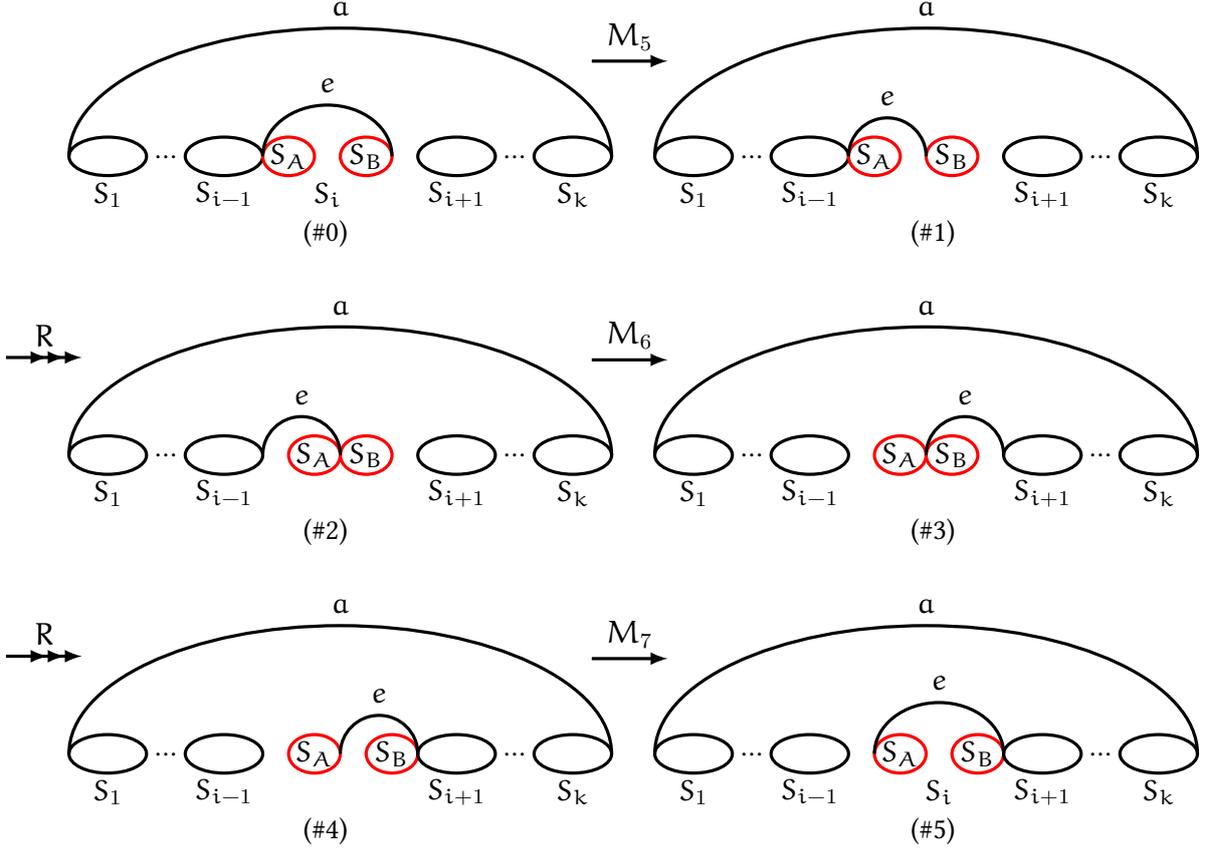
\begin{figure}[t]
\centering
\begin{minipage}{0.07\textwidth}
\begin{tikzpicture}
\path [->,draw=none] (0,0) (0.7,0);
\path [->,draw=none] (0,-1.4) (0.7,-1.4);
\end{tikzpicture}
\end{minipage}
\hspace{-0.03\textwidth}
\begin{minipage}{0.45\textwidth}
\begin{tikzpicture}[xscale=0.85,yscale=0.85]
\draw[very thick] (0.6,0) ellipse (0.6 and 0.3);
\draw[very thick] (2.4,0) ellipse (0.6 and 0.3);
\draw[very thick,red] (3.4,0) ellipse (0.4 and 0.3);
\draw[very thick,red] (4.6,0) ellipse (0.4 and 0.3);
\draw[very thick] (6.0,0) ellipse (0.6 and 0.3);
\draw[very thick] (7.8,0) ellipse (0.6 and 0.3);
\draw[very thick] (0,0) arc (180:0:4.2 and 2);
\draw[very thick] (3,0) arc (180:0:1 and 0.8);
\node at (1.5,0) {...};
\node at (3.4,0) {$S_A$};
\node at (4.6,0) {$S_B$};
\node at (4.0,-0.6) {$S_i$};
\node at (6.9,0) {...};
\node at (4.2,2.3) {$a$};
\node at (4.0,1.1) {$e$};
\node at (0.6,-0.6) {$S_1$};
\node at (2.4,-0.6) {$S_{i-1}$};
\node at (6.0,-0.6) {$S_{i+1}$};
\node at (7.8,-0.6) {$S_{k}$};
\end{tikzpicture}
\end{minipage}
\hspace{-0.03\textwidth}
\begin{minipage}{0.07\textwidth}
\begin{tikzpicture}
\path [->,very thick] (0,0) edge node [midway, above] {\large $M_5$} (1.0,0);
\path [->,draw=none] (0,-1.8) (1.0,-1.8);
\end{tikzpicture}
\end{minipage}
\hspace{-0.03\textwidth}
\begin{minipage}{0.45\textwidth}
\begin{tikzpicture}[xscale=0.85,yscale=0.85]
\draw[very thick] (0.6,0) ellipse (0.6 and 0.3);
\draw[very thick] (2.4,0) ellipse (0.6 and 0.3);
\draw[very thick,red] (3.4,0) ellipse (0.4 and 0.3);
\draw[very thick,red] (4.6,0) ellipse (0.4 and 0.3);
\draw[very thick] (6.0,0) ellipse (0.6 and 0.3);
\draw[very thick] (7.8,0) ellipse (0.6 and 0.3);
\draw[very thick] (0,0) arc (180:0:4.2 and 2);
\draw[very thick] (3,0) arc (180:0:0.6 and 0.6);
\node at (1.5,0) {...};
\node at (3.4,0) {$S_A$};
\node at (4.6,0) {$S_B$};
\node at (6.9,0) {...};
\node at (4.2,2.3) {$a$};
\node at (3.6,0.9) {$e$};
\node at (0.6,-0.6) {$S_1$};
\node at (2.4,-0.6) {$S_{i-1}$};
\node at (6.0,-0.6) {$S_{i+1}$};
\node at (7.8,-0.6) {$S_{k}$};
\end{tikzpicture}
\end{minipage}

\begin{minipage}{0.04\textwidth}
\vphantom{a}    
\end{minipage}
\begin{minipage}{0.45\textwidth}
\centering
(\#0)  
\end{minipage}
\begin{minipage}{0.04\textwidth}
\vphantom{a}    
\end{minipage}
\begin{minipage}{0.45\textwidth}
\centering
(\#1)  
\end{minipage}
\vspace{5pt}

\begin{minipage}{0.07\textwidth}
\begin{tikzpicture}
\path [->>>,very thick] (0,0) edge node [midway, above] {\large $R$} (1.0,0);
\path [->,draw=none] (0,-1.8) (1.0,-1.8);
\end{tikzpicture}
\end{minipage}
\hspace{-0.03\textwidth}
\begin{minipage}{0.45\textwidth}
\begin{tikzpicture}[xscale=0.85,yscale=0.85]
\draw[very thick] (0.6,0) ellipse (0.6 and 0.3);
\draw[very thick] (2.4,0) ellipse (0.6 and 0.3);
\draw[very thick,red] (3.8,0) ellipse (0.4 and 0.3);
\draw[very thick,red] (4.6,0) ellipse (0.4 and 0.3);
\draw[very thick] (6.0,0) ellipse (0.6 and 0.3);
\draw[very thick] (7.8,0) ellipse (0.6 and 0.3);
\draw[very thick] (0,0) arc (180:0:4.2 and 2);
\draw[very thick] (3,0) arc (180:0:0.6 and 0.6);
\node at (1.5,0) {...};
\node at (3.8,0) {$S_A$};
\node at (4.6,0) {$S_B$};
\node at (6.9,0) {...};
\node at (4.2,2.3) {$a$};
\node at (3.6,0.9) {$e$};
\node at (0.6,-0.6) {$S_1$};
\node at (2.4,-0.6) {$S_{i-1}$};
\node at (6.0,-0.6) {$S_{i+1}$};
\node at (7.8,-0.6) {$S_{k}$};
\end{tikzpicture}
\end{minipage}
\hspace{-0.03\textwidth}
\begin{minipage}{0.07\textwidth}
\begin{tikzpicture}
\path [->,very thick] (0,0) edge node [midway, above] {\large $M_6$} (1.0,0);
\path [->,draw=none] (0,-1.8) (1.0,-1.8);
\end{tikzpicture}
\end{minipage}
\hspace{-0.03\textwidth}
\begin{minipage}{0.45\textwidth}
\begin{tikzpicture}[xscale=0.85,yscale=0.85]
\draw[very thick] (0.6,0) ellipse (0.6 and 0.3);
\draw[very thick] (2.4,0) ellipse (0.6 and 0.3);
\draw[very thick,red] (3.8,0) ellipse (0.4 and 0.3);
\draw[very thick,red] (4.6,0) ellipse (0.4 and 0.3);
\draw[very thick] (6.0,0) ellipse (0.6 and 0.3);
\draw[very thick] (7.8,0) ellipse (0.6 and 0.3);
\draw[very thick] (0,0) arc (180:0:4.2 and 2);
\draw[very thick] (4.2,0) arc (180:0:0.6 and 0.6);
\node at (1.5,0) {...};
\node at (3.8,0) {$S_A$};
\node at (4.6,0) {$S_B$};
\node at (6.9,0) {...};
\node at (4.2,2.3) {$a$};
\node at (4.8,0.9) {$e$};
\node at (0.6,-0.6) {$S_1$};
\node at (2.4,-0.6) {$S_{i-1}$};
\node at (6.0,-0.6) {$S_{i+1}$};
\node at (7.8,-0.6) {$S_{k}$};
\end{tikzpicture}
\end{minipage}

\begin{minipage}{0.04\textwidth}
\vphantom{a}    
\end{minipage}
\begin{minipage}{0.45\textwidth}
\centering
(\#2)  
\end{minipage}
\begin{minipage}{0.04\textwidth}
\vphantom{a}    
\end{minipage}
\begin{minipage}{0.45\textwidth}
\centering
(\#3)  
\end{minipage}
\vspace{5pt}

\begin{minipage}{0.07\textwidth}
\begin{tikzpicture}
\path [->>>,very thick] (0,0) edge node [midway, above] {\large $R$} (1.0,0);
\path [->,draw=none] (0,-1.8) (1.0,-1.8);
\end{tikzpicture}
\end{minipage}
\hspace{-0.03\textwidth}
\begin{minipage}{0.45\textwidth}
\begin{tikzpicture}[xscale=0.85,yscale=0.85]
\draw[very thick] (0.6,0) ellipse (0.6 and 0.3);
\draw[very thick] (2.4,0) ellipse (0.6 and 0.3);
\draw[very thick,red] (3.8,0) ellipse (0.4 and 0.3);
\draw[very thick,red] (5.0,0) ellipse (0.4 and 0.3);
\draw[very thick] (6.0,0) ellipse (0.6 and 0.3);
\draw[very thick] (7.8,0) ellipse (0.6 and 0.3);
\draw[very thick] (0,0) arc (180:0:4.2 and 2);
\draw[very thick] (4.2,0) arc (180:0:0.6 and 0.6);
\node at (1.5,0) {...};
\node at (3.8,0) {$S_A$};
\node at (5.0,0) {$S_B$};
\node at (6.9,0) {...};
\node at (4.2,2.3) {$a$};
\node at (4.8,0.9) {$e$};
\node at (0.6,-0.6) {$S_1$};
\node at (2.4,-0.6) {$S_{i-1}$};
\node at (6.0,-0.6) {$S_{i+1}$};
\node at (7.8,-0.6) {$S_{k}$};
\end{tikzpicture}
\end{minipage}
\hspace{-0.03\textwidth}
\begin{minipage}{0.07\textwidth}
\begin{tikzpicture}
\path [->,very thick] (0,0) edge node [midway, above] {\large $M_7$} (1.0,0);
\path [->,draw=none] (0,-1.8) (1.0,-1.8);
\end{tikzpicture}
\end{minipage}
\hspace{-0.03\textwidth}
\begin{minipage}{0.45\textwidth}
\begin{tikzpicture}[xscale=0.85,yscale=0.85]
\draw[very thick] (0.6,0) ellipse (0.6 and 0.3);
\draw[very thick] (2.4,0) ellipse (0.6 and 0.3);
\draw[very thick,red] (3.8,0) ellipse (0.4 and 0.3);
\draw[very thick,red] (5.0,0) ellipse (0.4 and 0.3);
\draw[very thick] (6.0,0) ellipse (0.6 and 0.3);
\draw[very thick] (7.8,0) ellipse (0.6 and 0.3);
\draw[very thick] (0,0) arc (180:0:4.2 and 2);
\draw[very thick] (3.4,0) arc (180:0:1 and 0.8);
\node at (1.5,0) {...};
\node at (3.8,0) {$S_A$};
\node at (5.0,0) {$S_B$};
\node at (4.4,-0.6) {$S_i$};
\node at (6.9,0) {...};
\node at (4.2,2.3) {$a$};
\node at (4.4,1.1) {$e$};
\node at (0.6,-0.6) {$S_1$};
\node at (2.4,-0.6) {$S_{i-1}$};
\node at (6.0,-0.6) {$S_{i+1}$};
\node at (7.8,-0.6) {$S_{k}$};
\end{tikzpicture}
\end{minipage}

\begin{minipage}{0.04\textwidth}
\vphantom{a}    
\end{minipage}
\begin{minipage}{0.45\textwidth}
\centering
(\#4)  
\end{minipage}
\begin{minipage}{0.04\textwidth}
\vphantom{a}    
\end{minipage}
\begin{minipage}{0.45\textwidth}
\centering
(\#5)  
\end{minipage}

\caption{An example of a shift sequence. 
The black longer bubbles represent minimal segments, and there may be $0$ or more of them. 
The red shorter bubbles represent valid sub-trees and are not necessarily minimal. }
\label{fig:shift-recursion}
\end{figure}

\begin{lemma} \label{lem:AM-shifting}
Let $T$ be a non-crossing spanning tree of $m$ edges decomposing into $(T_A,T_B,T_C)$ by \Cref{fac:ncst-decompose}, where $T_B$ and $T_C$ are empty, and $T_A$ is not empty. 
Then we can use $\ell\leq 3m$ flip moves to simulate a right shifting, i.e., reach a non-crossing spanning tree $T'$ decomposing into $(T'_A,T'_B,T'_C)$ where both $T'_A$ and $T'_C$ are empty, and $T'_B=T_A$. 
\end{lemma}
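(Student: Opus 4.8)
The plan is to construct the shift sequence recursively, mirroring the Fuss--Catalan decomposition of \Cref{fac:ncst-decompose}, and to prove correctness together with the length bound by induction on $m$. First, by \Cref{def:shifting} it suffices to treat right shifting: a left shifting is realised by running the right-shifting sequence for the reverse transition backwards, and since $P_{\mathrm{FM}}$ is reversible every reversed step is again a legal flip. In the base case $|T_A|=1$, hence $m=2$, so $T$ consists of the overarching edge $(0,2)$ together with $(0,1)$, and the single flip replacing $(0,1)$ by $(1,2)$ yields $T'$, using $1\le 3m$ flips.

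For the inductive step I would iterate \Cref{fac:ncst-decompose} (equivalently \Cref{cor:path-tree-bij} together with \Cref{prop:concatenation}) to write $T_A$ as a concatenation $S_1\cdot S_2\cdots S_k$ of its outmost minimal segments, glued along the spine $0=v_0,v_1,\dots,v_k=m-1$, where each $S_j$ hangs under the edge $e_j=(v_{j-1},v_j)$, which is the pivot edge of $S_j$. Since in $T$ the gap sits between $m-1$ and $m$, I would process the segments from the gap side inwards, $S_k,S_{k-1},\dots,S_1$, sliding each one past one vertex; after all $k$ rounds every segment occupies one slot further right, the gap has migrated to just after vertex $0$, and the tree obtained is exactly $T'$.

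The heart of the argument is sliding a single segment $S_i$. Let $e$ be its pivot edge, and let $S_i$'s own \Cref{fac:ncst-decompose} split its interior into two strictly smaller non-crossing trees $S_A$ and $S_B$ separated by a gap, so that the picture is the leftmost panel of \Cref{fig:shift-recursion}. The sliding is the five-stage schedule of that figure: ($M_5$) flip $e$ so that it overarches only $S_A$ (relocating $e$ to join the start of $S_i$ to the first vertex of $S_B$); ($R$) recursively right-shift $S_A$, which is legitimate because $\{e\}\cup S_A$ then decomposes as $(S_A,\emptyset,\emptyset)$, exactly the hypothesis of \Cref{lem:AM-shifting} with strictly fewer edges; ($M_6$) flip $e$ so that it overarches only the now-shifted $S_B$; ($R$) recursively right-shift $S_B$; ($M_7$) flip $e$ back up so that it overarches both shifted pieces. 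One checks at each stage that the $M$-flip removes one edge and inserts a new edge whose endpoints lie on opposite sides of the induced cut (so the graph stays a spanning tree) and whose span contains no edge leaving that span (so it stays non-crossing); the degenerate cases $S_A=\emptyset$, $S_B=\emptyset$, or $S_i$ a single edge simply collapse some of the five stages. All these flips are supported strictly inside the span of the overarching edge $a=(0,m)$, so nothing outside is disturbed, which is exactly why the shifting can be realised in the rigid form of \eqref{equ:shifting-illustration}.

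For the count I would charge the at most three flips of $e$ during the sliding of $S_i$ to the edge $e$ itself; since each edge of $T_A$ serves as the pivot $e$ of exactly one (sub)segment over the whole recursion and is otherwise never moved, the total number of flips is at most $3\,|T_A|=3(m-1)<3m$, as claimed. The main obstacle is the simultaneous balancing of three constraints: every intermediate graph must remain a non-crossing spanning tree (forcing the precise placement of each relocated edge and careful handling of all empty-subtree cases), the total must not exceed $3m$ flips (forbidding any wasteful re-routing), and --- looking ahead to the congestion estimate --- the schedule must be \emph{invertible with $O(1)$ side information}, i.e.\ from a single flip transition together with the recursion depth one must be able to reconstruct the entire state of the construction and hence the endpoints of the shift. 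It is this last requirement that dictates both the overarching-edge-preserving form of the shift and the hierarchical $M_5,R,M_6,R,M_7$ ordering, and keeping the recursion clean enough to support it is the delicate part.
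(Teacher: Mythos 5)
Your proposal is correct and follows essentially the same strategy as the paper's proof: decompose $T_A$ into the minimal segments under the pivot edge $a$, process them from the gap side inward, and slide each segment via the five-stage $M_5$, $R$, $M_6$, $R$, $M_7$ schedule with recursive shift sequences for the two parts of the segment. Your charging argument (each edge of $T_A$ is the pivot of exactly one minimal segment at exactly one recursion depth and is moved only during the three $M$-flips of its own sliding) gives the same $3(m-1)\le 3m$ bound that the paper records, and your handling of the degenerate empty-part cases matches the paper's case analysis.
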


To prove this, we design a recursive way of simulating shifts.
We decompose the subtree to shift into minimal segments, and then move them one at a time.
To move a minimal segment, we perform edge flips so that subtrees of the minimal segment are in the form of a shift,
and finish moving it using recursion.
See \Cref{fig:shift-recursion} for an illustration of an example.

\begin{proof}
We construct the sequence by induction on the number of edges of the tree. 
Let $T$ be a non-crossing spanning tree where $T_B$ and $T_C$ are empty. 
Denote by $a=(a_1,a_2)$ the pivot edge of $T$. 
Let $S_1,\cdots,S_k$ be the minimal segments of $T_A$ from left to right. 
We perform the following one by one for each minimal segment, such that after finishing the first $t$ minimal segments, we obtain a non-crossing spanning tree $T''$ such that $T''_A$ contains $S_1,\cdots,S_{k-t}$, and $T''_B$ contains $S_{k-t+1},\cdots,S_{k}$

For each current segment $S=S_i$, let $e$ be its pivot edge, and we perform a sequence of three flip moves, $M_5,M_6,M_7$, with recursively constructed shift sequences under $e$ after $M_5$ and $M_6$, respectively,
to shift the subtrees of $S$ one by one. 
Note that the overarching edge of $e$ is $a$. 
Decompose $S$ into $(S_A, S_B, \emptyset)$; the last component is empty because $S$ is a minimal segment. 
Let $p$ be the initial left endpoint of its pivot edge $e$, and fix $g_e$ to be the start of the gap before any flip moves have been made beneath $e$. 
Let $g_a$ be the start of the current gap beneath $a$. 
Therefore, the initial position of $e$ is $(p,g_a)$, 
$S_A$ is given by $[p,g_e]$, 
and $S_B$ is given by $[g_e+1,g_a]$.

First assume that neither of $S_A$ or $S_B$ is empty. The simulation of the shifting is given by the following. 
\begin{itemize}
\item[($M_5$)] Let $M_5$ move $e$ from $(p,g_a)$ to $(p,g_e+1)$. 
Since the left endpoint of $e$ remains connected to the subtree $S_B=[g_e+1,g_a]$, the result is still a tree, and since there are no edges other than $e$ going from $S_A$ to $S_B$, the result is still non-crossing, and thus $M_5$ is a valid flip move. 
\item[($R$)] Now the position of $S_A$ and the edge $e$ meet the prerequisite for a right shifting. We perform a right shifting. 
\item[($M_6$)] Let $M_6$ move $e$ from $(p,g_e+1)$ to $(g_e+1,g_a+1)$. 
The resulting graph is still a tree, because one endpoint of $e$ that joins $S_A$ and $S_B$ remains unchanged, and the other endpoint is still outside the $S_AS_B$ part after the move. 
Moreover, since the overarching edge of $e$ is $a$, there is no edge connecting any point in $S_B$ with another point to the left of $p$. And $g_a$ is the gap beneath the edge $a$, meaning there is no edge joining any point in $S_B$ with another point to the right of $g_a+1$. 
This means the resulting tree is still non-crossing,
and hence $M_6$ is a valid flip move. 
\item[($R$)] Now the position of $S_B$ and the edge $e$ meet the prerequisite for a right shifting. We perform a right shifting. 
\item[($M_7$)] Let $M_7$ move $e$ from $(g_e+1,g_a+1)$ to $(p+1,g_a+1)$. 
A similar argument as $M_5$ also gives that $M_7$ is a valid flip move. 
\end{itemize}

We tweak the above procedure a bit when at least one of $S_A$ and $S_B$ is empty:
\begin{enumerate}
\item When both subtrees are empty, we only use $M_6$. 
Note that since $S_A$ is empty, we meet the assumption for $M_6$, and since $p=g_e=g_a-1$, $e$ is in the correct initial position for $M_6$ and $S$ has shifted right. This is our base case. 
\item When $S_A$ is empty but $S_B$ is not, we use moves $M_5$ and $M_6$, and then recursively construct a shift sequence under $e$. 
Since $S_A$ is empty, we satisfy the assumption for $M_6$. 
Note $p=g_e$, so $M_6$ puts $e$ in a spot shifted right from its initial position, and by the induction $S_A$ is shifted right as well, so $S$ has shifted right. 
\item When $S_A$ is not empty but $S_B$ is, we recursively construct a shift sequence under $e$, and then use $M_6$ and $M_7$. 
Since $S_B$ is empty and we begin by shifting $A$, we satisfy the assumptions for $M_6$ and $M_7$, and since $g_e+1=g_a$, $e$ is in the correct initial position for $M_6$. 
Note $M_7$ puts $e$ in a spot shifted right from its initial position, and by the induction, we have shifted $S_A$ right, so $S$ has shifted right. 

\end{enumerate}

We concatenate the sequences for each $S_i$ to generate a sequence to shift $T_A$ right. 
Note that each edge except the pivot edge of $T$ is moved one to three times, so the total length of the shift sequence is at most $3m$. 
\end{proof}

\subsection{Path construction}

We are going to apply the path method, namely \Cref{thm:path-method}, to compare $P_{\mathrm{AM}}$ and $P_{\mathrm{FM}}$. 
Because of the bijection between $2$-Dyck paths and NCSTs,
we simply treat $P_{\mathrm{AM}}$ and $P_{\mathrm{FM}}$ as having the same stationary distribution.
To compare the two chains, we will need to construct a path (namely, a sequence of transitions) for any two $2$-Dyck paths $I$ and $F$ where $P_{\mathrm{AM}}(I,F)>0$ using only transitions in $P_{\mathrm{FM}}$.
We have already described how to simulate an adjacent move via flip moves and shifting in \Cref{lem:AM-simulation}, and how to simulate shifting via flip moves in \Cref{lem:AM-shifting}.
We summarise the whole construction in \Cref{alg:canonical-path-ncst}, where we expand out the Type-1 move and omit the details of the Type-2 move to avoid redundancy. 
(The flip moves are already described in \Cref{lem:AM-simulation} and \Cref{fig:FM-sequences}(2$\ast$), and the \textsf{ShiftLeft} sequence is the reverse of \textsf{ShiftRight}.)

Let $T_I$ and $T_F$ be the non-crossing spanning trees corresponding to $I$ and $F$. 
The construction yields a path $T_I=Z_0\to Z_1\to\cdots\to Z_\ell=T_F$ where $P_{\mathrm{FM}}(Z_i,Z_{i+1})>0$ for any $i\le \ell-1$. 
Its validity, namely that each transition is valid, is implied by \Cref{lem:AM-simulation} and \Cref{lem:AM-shifting}. 
These two lemmas also imply an upper bound on the length of the path $\ell\leq 3n+2$ ($3n$ for the shift sequence and $2$ for the two simple flips).

\begin{algorithm}[p]
\caption{Constructing the canonical path}\label{alg:canonical-path-ncst}
\SetKwProg{Fn}{procedure}{\string:}{}
\KwIn{two $2$-Dyck paths $I\neq F$ of length $3n$ that $P_{\mathrm{AM}}(I,F)>0$}
\KwOut{a canonical path $I=Z_0, Z_1, \cdots, Z_{\ell}$}
Decide if $I$ to $F$ is a Type-1 or Type-2 move as in \Cref{lem:AM-simulation}\;
\eIf{It is a Type-1 move}{
\Return{\normalfont\textsf{Transition1}($I,F$)}\;
}{
\Return{\normalfont\textsf{Transition2}($I,F$)}\;
}
\BlankLine
\Fn{\normalfont\textsf{Transition1}($I,F$)}{
\tcp{It is required that $I\to F$ is a Type-1 move.}
Let $Z_0\gets I$\;
Find $a=(a_1,a_2)$, $e=(p,g_e+1)$ and $g_a$ as in \Cref{lem:AM-simulation}\;
\If{$p\leq g_e-1$}{
  Let $(\ell-2,Z_1,\cdots,Z_{\ell-2})\gets$\textsf{ShiftRight}($p,g_e+1,1,1$)\;
}
Perform an $M_1$ move of $e$ to obtain $Z_{\ell-1}$\;
Let $Z_\ell\gets F$, which is obtained by an $M_2$ move of $e$ from $Z_{\ell-1}$\;
\Return{$Z_0,\cdots,Z_\ell$}\;
}
\BlankLine
\Fn{\normalfont\textsf{ShiftRight}($s,t,\mathsf{depth},\mathsf{step}$)}{
Let $k\geq 1$ be the number of minimal segments $S_1,\cdots,S_k$ beneath the edge $(s,t)$\;
$\mathsf{last}\gets\mathsf{step}$\;
\For{$i$ from $k$ downto $1$}{
    Let $e=(p,q)$ be the pivot edge of $S_i$, and $g_e,g_{e+1}$ be the gap beneath $e$\tcp*{(\#0)}
    \If{$q\geq g_e+2$}{
        Perform an $M_5$ move of $e$ to obtain $Z_\mathsf{last}$, and update $\mathsf{last}\gets \mathsf{last}+1$\tcp*{(\#1)}
    }
    \If{$p\leq g_e-1$}{
      $(\ell',Z_{\mathsf{last}},\cdots,Z_{\mathsf{last}+\ell'-1})\gets$\textsf{ShiftRight}($p,g_e+1,\mathsf{depth}+1,\mathsf{last}$)\;
        Update $\mathsf{last}\gets \mathsf{last}+\ell'$\tcp*{(\#2)}
    }
    Perform an $M_6$ move of $e$ to obtain $Z_\mathsf{last}$, and update $\mathsf{last}\gets \mathsf{last}+1$\tcp*{(\#3)}
    \If{$q\geq g_e+2$}{
      $(\ell'',Z_{\mathsf{last}},\cdots,Z_{\mathsf{last}+\ell''-1})\gets$\textsf{ShiftRight}($g_e+1,q+1,\mathsf{depth}+1,\mathsf{last}$)\;
        Update $\mathsf{last}\gets \mathsf{last}+\ell''$\tcp*{(\#4)}
    }
    \If{$p\leq g_e-1$}{
        Perform an $M_7$ move of $e$ to obtain $Z_\mathsf{last}$, and update $\mathsf{last}\gets \mathsf{last}+1$\tcp*{(\#5)}
    }
}
\Return{$(\mathsf{last}-\mathsf{step},Z_{\mathsf{step}},\cdots,Z_{\mathsf{last}})$}\;
}
\BlankLine
\Fn{\normalfont\textsf{Transition2}($I,F$)}{
\tcp{It is required that $I\to F$ is a Type-2 move.}
\tcp{Omitted}
}
\Fn{\normalfont\textsf{ShiftLeft}($s,t,\mathsf{depth},\mathsf{step}$)}{
\tcp{Omitted}
}
\end{algorithm}
\afterpage{\clearpage}

\subsection{Bound the congestion}
The goal of the Markov chain comparison argument is to show the following lemma:
\begin{lemma} \label{lem:AM-vs-FM}
Let $\mathcal{E}_{\mathrm{AM}}$ be the Dirichlet form of the adjacent move chain $P_{\mathrm{AM}}$ over the uniform distribution of $2$-Dyck paths of length $3n$, 
and $\mathcal{E}_{\mathrm{FM}}$ be the Dirichlet form of the flip chain $P_{\mathrm{FM}}$ over the uniform distribution of non-crossing spanning trees containing $n$ edges. 
Then for all functions $f:\mathcal{B}\rightarrow\mathbb{R}$,\footnote{With a slight notation abuse, we assume $f(X)=f(T_X)$ due to the one-to-one correspondence.} 
\[\mathcal{E}_{\mathrm{FM}}(f,f)\geq\Omega(n^{-4})\cdot\mathcal{E}_{\mathrm{AM}}(f,f).\]
\end{lemma}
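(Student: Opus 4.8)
The plan is to invoke the path method of \Cref{thm:path-method} with $\widetilde{P} = P_{\mathrm{AM}}$, $P = P_{\mathrm{FM}}$, and the collection of canonical paths $\Gamma = \{\Gamma_{IF}\}$ produced by \Cref{alg:canonical-path-ncst}. Since by \Cref{cor:path-tree-bij} we may identify the two stationary distributions (both uniform), the theorem gives $\mathcal{E}_{\mathrm{AM}}(f,f) \le B\,\mathcal{E}_{\mathrm{FM}}(f,f)$ for all $f$, where $B$ is the congestion ratio, so it suffices to prove $B = O(n^4)$. Writing $(Z,Z')$ for a flip transition with $P_{\mathrm{FM}}(Z,Z')>0$ and $(I,F)$ for an adjacent-move transition with $P_{\mathrm{AM}}(I,F)>0$, and using that both measures are uniform so the factors $\pi(Z)$ and $\widetilde{\pi}(I)$ cancel, we have
\[
B \;=\; \max_{(Z,Z')}\ \frac{1}{P_{\mathrm{FM}}(Z,Z')}\sum_{(I,F):\,(Z,Z')\in\Gamma_{IF}} P_{\mathrm{AM}}(I,F)\,\abs{\Gamma_{IF}}.
\]

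First I would dispose of the three easily controlled quantities. By \eqref{equ:am-chain}, $P_{\mathrm{AM}}(I,F) \le 1/(6n-2) = O(n^{-1})$ for $I\neq F$; by \Cref{lem:AM-simulation} and \Cref{lem:AM-shifting} every canonical path has $\abs{\Gamma_{IF}} \le 3n+2 = O(n)$; and by \eqref{eqn:fm-transition-bound}, $1/P_{\mathrm{FM}}(Z,Z') = O(n^3)$ whenever $P_{\mathrm{FM}}(Z,Z')>0$. Hence, letting $N(Z,Z')$ denote the number of adjacent-move transitions $(I,F)$ whose canonical path uses the flip transition $(Z,Z')$, each summand is $O(1)$ and $B = O(n^3)\cdot\max_{(Z,Z')} N(Z,Z')$. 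The whole lemma therefore reduces to showing $N(Z,Z') = O(n)$ for every flip transition $(Z,Z')$.

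This is the crux, and it is an \emph{encoding} argument: I would show that the pair $(I,F)$ can be reconstructed from $(Z,Z')$ together with $O(n)$ worth of side information. Fix $(Z,Z')=(T,T')$ with $P_{\mathrm{FM}}(T,T')>0$, so $T$ and $T'$ differ by relocating a single edge $e$. The endpoints of $e$ before and after the move reveal the gaps $g_e,g_a$ and the endpoints of the minimal segment (or subtree) currently being operated on inside some, possibly deeply nested, \textsf{ShiftRight} call of \Cref{alg:canonical-path-ncst}. A label $\mu\in\{M_1,\ldots,M_7\}$ (augmented by one bit to distinguish the Type-2 and \textsf{ShiftLeft} variants), together with $O(1)$ bits of local bookkeeping, identifies which instruction of the algorithm produced the transition and hence the phase of that shift; reading off the structure of $T$ (equivalently $T'$) then tells us exactly which of its minimal segments and sub-subtrees have already been shifted and which have not. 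Finally, running the deterministic algorithm forward from this configuration --- unwinding $\mathsf{depth}$ levels of enclosing shifts, where $\mathsf{depth}$ is the recursion-depth counter at that point --- recovers $T_F$, and running it backward recovers $T_I$. The particular, overarching-edge-preserving form of shifting in \eqref{equ:shifting-illustration} and the rigid recursive template of \textsf{ShiftRight} are exactly what make this reconstruction unambiguous. Since $\mu$ and the bookkeeping bits range over $O(1)$ values while $\mathsf{depth}$ is bounded by the number of edges and so ranges over $O(n)$ values, the map $(I,F)\mapsto(\mu,\mathsf{depth},\text{bits})$ is injective for each fixed $(Z,Z')$ into a set of size $O(n)$; hence $N(Z,Z') = O(n)$. (Should one prefer to count occurrences of $(Z,Z')$ within a path with multiplicity, the same injection applies, since distinct occurrences come with distinct phase and counter data.)

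The hard part is precisely this reconstruction for flips internal to a shift sequence: a single shift touches $\Omega(n)$ edges and recurses to depth as large as $\Omega(n)$, so an isolated intermediate flip is a priori extremely ambiguous; the content of \Cref{lem:AM-shifting} and \Cref{fig:shift-recursion} is that the recursive design confines this ambiguity to the $O(n)$ choices of recursion depth plus $O(1)$ of local data. The flips $M_1,\ldots,M_4$ and the interval-boundary bookkeeping are routine by comparison. Putting the pieces together gives $B = O(n^3)\cdot O(n) = O(n^4)$, and \Cref{thm:path-method} then yields $\mathcal{E}_{\mathrm{FM}}(f,f) \ge B^{-1}\mathcal{E}_{\mathrm{AM}}(f,f) = \Omega(n^{-4})\,\mathcal{E}_{\mathrm{AM}}(f,f)$, as required.
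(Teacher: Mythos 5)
Your proposal is correct and follows essentially the same route as the paper: invoke \Cref{thm:path-method} with $\widetilde{P}=P_{\mathrm{AM}}$ and $P=P_{\mathrm{FM}}$, split the congestion into the three cheap factors $P_{\mathrm{AM}}(I,F)=O(n^{-1})$, $\abs{\Gamma_{IF}}=O(n)$, $1/P_{\mathrm{FM}}(Z,Z')=O(n^{3})$, and then bound the number of $(I,F)$ pairs charged to any one flip transition by an $O(n)$-valued encoding whose only non-constant coordinate is the recursion depth of the shift subroutine. This is exactly the paper's decomposition, with the encoding of \Cref{lem:AM-vs-FM-encoding} and \Cref{cor:FM-congestion} sketched in prose rather than stated as a separate lemma; your alphabet (a label among the flip types plus a type/direction bit plus the depth) is a cosmetic variant of the paper's $\{\mathsf{Left},\mathsf{Right}\}\times\{M_1,\dots,M_4,S_1,S_2\}\times[n]$ and yields the same $O(n)$ count.

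The one place where your write-up is thinner than the paper is the reconstruction inside a shift: you correctly identify it as the crux and explain \emph{why} it ought to work (the overarching-edge-preserving form of \eqref{equ:shifting-illustration} plus the rigid recursive template), but you do not actually carry out the case analysis that distinguishes the $M_5/M_6/M_7$ phase from the two endpoints of the moved edge and the gap beneath its overarching edge, nor the bottom-up unwinding of the nested \textsf{ShiftRight} calls. That is precisely the content of the paper's \Cref{lem:AM-vs-FM-encoding}, and a complete proof would need to supply it. As a plan for proving \Cref{lem:AM-vs-FM} given that encoding lemma, however, your argument is right.
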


The next lemma shows that our canonical path construction ensures that each transition in the flip move $P_{\mathrm{FM}}$ is used by a limited number of $(I,F)$ pairs in the adjacent move $P_{\mathrm{AM}}$. 
This is proved by designing an encoding that we can uniquely recover the $(I,F)$ pair by looking at the current transition and the encoding. 
The number of $(I,F)$ pairs is then bounded by the number of possible encodings, which we ensure to be linear in $n$. 

\begin{lemma}[encoding] \label{lem:AM-vs-FM-encoding}
Let $Z\to Z'$ be a transition of $P_{\mathrm{FM}}$ over non-crossing spanning trees containing $n$ edges. 
Let $I\neq F$ be 
two $2$-Dyck paths
such that $P_{\mathrm{AM}}(I,F)>0$, and that the path constructed according to \Cref{alg:canonical-path-ncst} contains the transition $Z\to Z'$. 
Then there exists an injective mapping
\[
\varphi_{Z\to Z'}: \mathcal{T}_n\times\mathcal{T}_n\to\{\mathsf{Left},\mathsf{Right}\}\times\{M_1,M_2,M_3,M_4,S_1,S_2\}\times [n]
\]
where $\mathcal{T}_n$ is the set of all non-crossing spanning trees of $n$ edges. 
In other words, given $Z\to Z'$, and $(\mathsf{dir},M,d)$, we can uniquely determine $I,F$ such that $\varphi_{Z\to Z'}(T_I,T_F)=(\mathsf{dir},M,d)$,
where $\mathsf{dir}\in\{\mathsf{Left},\mathsf{Right}\}$, $M\in\{M_1,M_2,M_3,M_4,S_1,S_2\}$, and $d\in[n]$. 
\end{lemma}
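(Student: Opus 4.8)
The plan is to prove injectivity by exhibiting a partial left inverse -- a ``decoder'' -- for $\varphi_{Z\to Z'}$: a procedure that takes the transition $Z\to Z'$ together with a triple $(\mathsf{dir},M,d)\in\{\mathsf{Left},\mathsf{Right}\}\times\{M_1,M_2,M_3,M_4,S_1,S_2\}\times[n]$ and reconstructs at most one pair $(T_I,T_F)$, returning exactly $(T_I,T_F)$ whenever $(\mathsf{dir},M,d)=\varphi_{Z\to Z'}(T_I,T_F)$ for an admissible pair. A map admitting a left inverse on its image is injective, so this suffices. By reversing the whole canonical path we may assume $\mathsf{dir}=\mathsf{Left}$, i.e.\ $F=(I\setminus\{y\})\cup\{x\}$ with $y=x+1$; the bit $\mathsf{dir}$ merely records which reduction to undo. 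Since the codomain has size $O(n)$ (the coordinate $d$ ranges over $[n]$ and the other two are constant-sized), this injectivity is precisely the $O(n)$ bound on the number of $(I,F)$ pairs through a fixed flip transition that \Cref{lem:AM-vs-FM} will consume.

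First I would record the structural invariants of the canonical path that make decoding possible. By \Cref{lem:AM-simulation}, \Cref{lem:AM-shifting} and \Cref{alg:canonical-path-ncst}, every tree $Z$ on the path agrees with both $T_I$ and $T_F$ outside the window $[x',y']$, so $Z$ already exposes all of $T_I$ and $T_F$ except on that window; and on the window the tree has one of the finitely many shapes drawn in \Cref{fig:FM-sequences}(1b)/(2b) for the top-level moves, or in \Cref{fig:shift-recursion} (the configurations (\#0)--(\#5), occurring at each recursion level) for the shift sequences. Crucially, each shift configuration splits the window, from left to right, into the minimal segments still in their original places, the minimal segments already shifted, the currently active segment carrying its active edge $e$, and a trailing block -- and all of these split points are legible from $Z$ alone, because (as in \Cref{def:shifting}) a shift keeps the overarching edge and the far end of the gap fixed.

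Next comes the case analysis on $M$. If $M\in\{M_1,M_2,M_3,M_4\}$ the transition $Z\to Z'$ is exactly that named top-level flip; reading the removed edge $e^-\in Z\setminus Z'$ and the added edge $e^+\in Z'\setminus Z$ and matching them against the defining formula for that move (for instance $M_1$ sends $(p,g_e+1)$ to $(a_1,g_e+1)$) recovers the parameters $a_1,a_2,p,g_e,g_a$, hence the window $[x',y']$; then undoing the already-performed prefix of the path (a deterministic operation once its parameters are known) recovers $\tilde I$, completing the remaining moves recovers $\tilde F$, and gluing with the untouched part of $Z$ yields $T_I$ and $T_F$, with $d$ irrelevant. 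If $M=S_1$ (respectively $S_2$) the transition lies inside the ShiftRight (respectively ShiftLeft) sequence of a Type-1 (respectively Type-2) move at recursion depth $d$; from $e^-,e^+$ one first reads off which of $M_5,M_6,M_7$ the transition is ($M_5$ keeps the left endpoint fixed and shortens on the right, $M_7$ keeps the right endpoint fixed and shortens on the left, $M_6$ moves both, all mirrored for ShiftLeft), and then the active edge together with the split of $Z$ pins down the segment currently being processed and the pre-move configuration; finally $d$ tells us precisely which blocks have been shifted at each of the $d$ nested levels, so that inverting -- un-shifting each shifted block, restoring each edge to its position before the corresponding $M_5/M_6/M_7$, and peeling the recursion from the outside in -- returns the tree at the start of the shift sequence, after which prepending and appending the top-level moves as before yields $T_I$ and $T_F$.

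The hard part will be the last step of the $M\in\{S_1,S_2\}$ case: showing that the single number $d$, together with $Z$ and the constant data in $(\mathsf{dir},M)$, pins down the entire nested window structure and the shifted/unshifted status of every block with no residual ambiguity. This is exactly where the deliberately unnatural shift of \Cref{def:shifting} pays off: because each shift preserves the overarching edge and the far end of its gap, the boundary of every nested active window is anchored to edges that survive in $Z$, so the windows at all depths can be reconstructed inductively from the outside in, with $d$ needed only to decide when to stop descending. I would formalise this by induction on $d$, the inductive statement being ``$Z\to Z'$ plus the depth plus the $O(1)$ data determines the depth-$d$ active window and the configuration within it''; the base case is the top-level ShiftRight or ShiftLeft, and the inductive step uses the fact, visible in \Cref{alg:canonical-path-ncst}, that a recursive call is entered only from configurations (\#2) and (\#4) of \Cref{fig:shift-recursion}, whose window endpoints are the then-current $g_e$ and $q$ and hence readable from $Z$.
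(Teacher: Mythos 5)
Your proposal is essentially the same argument as the paper's proof: you construct a decoder (left inverse) from $(Z\to Z',\mathsf{dir},M,d)$; you reduce to $\mathsf{dir}=\mathsf{Left}$; for $M\in\{M_1,\dots,M_4\}$ you read the flipped edge off $Z\triangle Z'$ and directly undo/redo the two top-level flips; and for $M\in\{S_1,S_2\}$ you exploit that the shift of \Cref{def:shifting} preserves the overarching edge and the far end of the gap, so the chain of overarching edges $a, e_0, e_1,\dots, e_d$ is legible in $Z$, one classifies the current flip as $M_5/M_6/M_7$ from the endpoint motion, and $d$ is used only to know the depth at which to stop. This matches the paper's recovery, which reads off the overarching chain from $Z$ and then reverts the shift sequence level by level, so the two proofs coincide in both the decoding strategy and the key structural invariant.
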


Let $\Gamma_{IF}$ be the path from $I$ to $F$.
We write $(Z,Z')\in \Gamma_{IF}$ if the $(Z,Z')$ transition is in the path $\Gamma_{IF}$.
\Cref{lem:AM-vs-FM-encoding} immediately implies the following.

\begin{corollary} \label{cor:FM-congestion}
  For each $(Z,Z')$ with $P_{\mathrm{FM}}(Z,Z')>0$, there are at most $12n$ pairs $(I,F)$ such that $(Z,Z')\in \Gamma_{IF}$. 
\end{corollary}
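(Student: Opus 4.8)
The plan is to derive the bound directly from the injective encoding map $\varphi_{Z\to Z'}$ guaranteed by Lemma \ref{lem:AM-vs-FM-encoding}. Fix a transition $(Z,Z')$ with $P_{\mathrm{FM}}(Z,Z')>0$, and let $N$ denote the number of pairs $(I,F)$ of $2$-Dyck paths with $P_{\mathrm{AM}}(I,F)>0$ whose canonical path $\Gamma_{IF}$ produced by \Cref{alg:canonical-path-ncst} uses the transition $Z\to Z'$. By the one-to-one correspondence of \Cref{cor:path-tree-bij}, each such pair $(I,F)$ corresponds to a unique pair $(T_I,T_F)\in\mathcal{T}_n\times\mathcal{T}_n$, so I would view this set of pairs as a subset $\mathcal{S}\subseteq\mathcal{T}_n\times\mathcal{T}_n$. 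Lemma \ref{lem:AM-vs-FM-encoding} says the restriction of $\varphi_{Z\to Z'}$ to $\mathcal{S}$ is injective into the finite set $\{\mathsf{Left},\mathsf{Right}\}\times\{M_1,M_2,M_3,M_4,S_1,S_2\}\times[n]$.

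First I would simply count the codomain: it has size $2\cdot 6\cdot n=12n$. Injectivity of $\varphi_{Z\to Z'}$ on $\mathcal{S}$ then gives $N=|\mathcal{S}|\le 12n$, which is exactly the claimed bound. Finally I would note that $(Z,Z')\in\Gamma_{IF}$ is precisely the hypothesis under which the encoding lemma applies, so every pair counted in the statement of the corollary lies in $\mathcal{S}$, and conversely; hence the count transfers verbatim.

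There is essentially no obstacle here: the corollary is a pure pigeonhole consequence of Lemma \ref{lem:AM-vs-FM-encoding}, and all the real work — constructing the encoding and verifying that the $(I,F)$ pair can be reconstructed from $Z\to Z'$ together with the triple $(\mathsf{dir},M,d)$ — has already been carried out in the proof of that lemma. The only thing worth a sentence of care is the bookkeeping that the canonical path is deterministic given $(I,F)$ (so that "the transition $Z\to Z'$ appears in $\Gamma_{IF}$" is a well-defined event), which is immediate from \Cref{alg:canonical-path-ncst} taking $(I,F)$ as its sole input. I would therefore keep the proof to two or three lines, citing \Cref{lem:AM-vs-FM-encoding} for injectivity and multiplying the cardinalities of the three factors.
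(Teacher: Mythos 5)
Your proposal is correct and takes the same approach as the paper, which simply states that the corollary ``immediately'' follows from Lemma~\ref{lem:AM-vs-FM-encoding}: restrict the injective encoding $\varphi_{Z\to Z'}$ to the set of pairs whose canonical path uses $(Z,Z')$ and count the codomain $\{\mathsf{Left},\mathsf{Right}\}\times\{M_1,\dots,M_4,S_1,S_2\}\times[n]$, which has size $2\cdot 6\cdot n=12n$. Your write-up merely makes this pigeonhole argument explicit, including the (correct) observation that the bijection of Corollary~\ref{cor:path-tree-bij} lets one identify $(I,F)$ with $(T_I,T_F)$ and that the path construction is deterministic.
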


We first conclude the proof of \Cref{lem:AM-vs-FM} by using the above encoding. 

\begin{proof}[Proof of \Cref{lem:AM-vs-FM}]
  Since $P_{\mathrm{AM}}$ and $P_{\mathrm{FM}}$ are reversible, \Cref{thm:path-method} applies. 
  The key quantity, namely the congestion, is bounded as follows:
  \begin{align*}
    B&=\max_{\substack{(Z,Z'):\\P_\mathrm{FM}(Z,Z')>0}}\sum_{\substack{(I,F):\\(Z,Z')\in \Gamma_{IF}}}\frac{\ell_{IF}\cdot P_\textrm{AM}(I,F)}{P_\textrm{FM}(Z,Z')}
    \leq\max_{\substack{(Z,Z'):\\P_\mathrm{FM}(Z,Z')>0}}\sum_{\substack{(I,F):\\(Z,Z')\in \Gamma_{IF}}}\frac{3n+2}{(6n-2)P_\textrm{FM}(Z,Z')}\\
    &\leq\frac{5}{4}\max_{\substack{(Z,Z'):\\P_\mathrm{FM}(Z,Z')>0}}\frac{1}{P_\mathrm{FM}(Z,Z')}\abs{\{(I,F):(Z,Z')\in \Gamma_{IF}\}}=O(n^4),
  \end{align*}
  where the last bound is due to \eqref{eqn:fm-transition-bound} and \Cref{cor:FM-congestion}. 
\end{proof}

%
%
%
%

\begin{proof}[Proof of \Cref{lem:AM-vs-FM-encoding}]
Recall that the edge flips defined in \Cref{lem:AM-simulation}, which we shall refer to as \emph{the top level}, invoke once the subroutine of constructing the shift sequence if the part to shift is non-empty. 
The shift sequence construction subroutine \textsf{ShiftLeft} or \textsf{ShiftRight} (\Cref{lem:AM-shifting}) might make further recursive calls. 
This inspires us to define the mapping
\[
\varphi_{Z\to Z'}(T_I,T_F)=(\mathsf{dir},M,d),
\]
where
\begin{itemize}
\item $\mathsf{dir}\in\{\mathsf{Left},\mathsf{Right}\}$ indicates the direction of adjacent move from $I$ to $F$, 
\item $M\in\{M_1,M_2,M_3,M_4,S_1,S_2\}$ indicates where the current transition $Z\to Z'$ is constructed as in \Cref{lem:AM-simulation}, and
\item $d\in[n]$ indicates the depth of the recursion of the shift sequence construction. 
Formally, it is the parameter $\mathsf{depth}$ of the recursive call where $Z'$ is obtained in \textsf{ShiftRight} (or \textsf{ShiftLeft}) in \Cref{alg:canonical-path-ncst}. 
It is defined only when $M\in\{S_1,S_2\}$, and otherwise arbitrary. 
\end{itemize}
The mapping defined above is total because the level of recursion cannot go beyond the number of edges $n$. 
It is then left for us to show that it is injective. 

Without loss of generality, we only consider the case $\mathsf{dir}=\mathsf{Left}$; otherwise, we can just swap $Z$ and $Z'$ to obtain the unique reconstruction of $I$ and $F$ using the argument for the case $\mathsf{dir}=\mathsf{Left}$, and then swap $I$ and $F$. 

If $M\in\{M_1,M_2,M_3,M_4\}$, the initial state $I$ and the final state $F$ can be very easily determined. 
Consider $M=M_1$; the other cases have a similar argument. 
This case corresponds to the \textbf{Type 1} move. 
We can find the edge being moved by looking at $Z\to Z'$, which is the edge $e$ as in the statement of $M_1$ move in \Cref{lem:AM-simulation}. 
We can also determine the edge $a$ therein because it is the overarching edge of $e$. 
The initial $I$ is determined by shifting the subtree beneath $e$ in $Z$ back to the left, and the final $F$ is determined by performing $M_2$ from $Z'$ (as we already know what the two edges are). 

Otherwise, $M=S_1$, which is a right shifting in the \textbf{Type 1} move, or $M=S_2$, which is a left shifting in the \textbf{Type 2} move. 
We prove the former case, as the argument for the latter one is identical. 


Let $e_d$ be the edge being moved when the move $Z\to Z'$ is performed. 
From $i=d-1$ to $i=0$, we can determine the overarching edge of $e_{i+1}$ from $Z$ (or $Z'$), denoted by $e_i$, whose current two endpoints in $Z$ (or $Z'$) are $(s_i,t_i)$.
Finally, find the overarching edge $a$ of $e_0$. 
The recursive calls that involve the current edge $e_d$ are therefore $\mathsf{ShiftRight}(s_i,t_i,i+1,\mathsf{step}_i)$ for $i=0,1,\cdots,d-1$.
We recover the initial on each level from bottom to top to obtain $Z_{\mathsf{step}_{d-1}-1},Z_{\mathsf{step}_{d-2}-1},\cdots,Z_{\mathsf{step}_{0}-1}=Z_1$ as follows. 

At recursion level $d$, the transition $Z\to Z'$ must be an edge move $M_5$, $M_6$ or $M_7$ at this level. 
This can be determined by the following argument. (See \Cref{fig:shift-recursion} as a reference.)
\begin{itemize}
\item If $e_d$ in $Z$ starts left before the gap beneath $e_{d-1}$, and if the left end of $e_d$ remains unchanged in $Z'$, then it is an $M_5$ move with a non-empty $S_B$ part. 
\item Otherwise, if $e_d$ in $Z'$ ends right after the gap beneath $e_{d-1}$, and if the right end of $e_d$ is the same in $Z$ and $Z'$, then it is an $M_7$ move with a non-empty $S_A$ part. 
\item Otherwise, it is an $M_6$ move. 
\end{itemize}
Knowing which move it is allows us to determine the $S_A$ and $S_B$ part of the current minimal segment $S_i$ that is being processed. 
To recover the initial state beneath $e_{d-1}$, we first recover the current minimal segment $S_i$ (obtaining \Cref{fig:shift-recursion}(\#0)), 
and then move everything beneath $e_{d-1}$ after the gap beneath $e_{d-1}$ to the left by one position. 
In this way, we revert the subroutine $\mathsf{ShiftRight}(s_{d-1},t_{d-1},d,\mathsf{step}_{d-1})$ and obtain $Z_{\mathsf{step}_{d-1}-1}$. 

Now suppose that we are at recursion depth $d'<d$. 
The current transition $Z\to Z'$ must be inside a recursive call on this level, which means $Z$ and $Z'$ are the same except the part with coordinates from $s_{d'+1}$ to $t_{d'+1}$ (namely the part beneath $e_{d'}$). 
There are two possible recursive calls in $\mathsf{ShiftRight}$. 
To determine which kind of recursive call it is (either the first one, between (\#1) and (\#2) in \Cref{alg:canonical-path-ncst}, or the second one, between (\#3) and (\#4)), we look at the gap beneath $e_{d'-1}$. 
If it is on the right of $e_{d'}$, then the recursive call is the first one, or otherwise it is the second one. 
In any case, we first recover the part beneath $e_{d'}$ and obtain $Z_{\mathsf{step}_{d'}-1}$.
At the current level, this ends up in either the state (\#1) or (\#3), which further allows us to go back to state (\#0) by recovering the initial state of the current minimal segment, 
and then move everything beneath $e_{d'-1}$ after the gap beneath $e_{d'-1}$ to the left by one position.  
In this way, we revert the subroutine $\mathsf{ShiftRight}(s_{d'-1},t_{d'-1},d',\mathsf{step}_{d'-1})$ and obtain $Z_{\mathsf{step}_{d'-1}-1}$. 

The above induction provides us a way to reconstruct the initial state beneath $e_0$, which gives $Z_0$. 

To recover the final state $T_F$, one only needs to modify the above procedure a bit, by moving every minimal segments to the right in each call of $\mathsf{ShiftRight}$, and perform the $M_1$ and $M_2$ move on $e_0$ and $a$ on the top level. 

The encoding is unique because, if there are two pairs $(I,F),(I',F')$ such that they receive the same encoding $\varphi_{Z\to Z'}(T_I,T_F)$, the above procedure correctly recovers the initial and final states. 
Because the procedure is deterministic and only depends on $Z\to Z'$ and the encoding, it generates the same output as containing the initial and final states, which means $(I,F)=(I',F')$.
\end{proof}

We end this section by concluding the proof of the main theorem. 

\begin{proof}[Proof of \Cref{thm:NCST}]
We combine our comparison of the adjacent move and flip chains to get
\begin{align*}
t_\mathrm{mix}(P_\mathrm{FM})&\leq\left(1+\frac{1}{2}\log\pi_\mathrm{min}\right)\frac{1}{\lambda(P_\mathrm{FM})}\\
&=\left(1+\frac{1}{2}\log\pi_\mathrm{min}\right)\frac{1}{\inf\left\{\frac{\mathcal{E}_\mathrm{FM}(f,f)}{\Var{\pi}{f}}\mid f:\Omega\to\mathbb{R},\Var{\pi}{f}\neq0\right\}}\tag*{(by \Cref{equ:lambda-def})}\\
&\leq\left(1+\frac{1}{2}\log\pi_\mathrm{min}\right)\frac{O(n^4)}{\inf\left\{\frac{\mathcal{E}_\mathrm{AM}(f,f)}{\Var{\pi}{f}}\mid f:\Omega\to\mathbb{R},\Var{\pi}{f}\neq0\right\}}\tag*{(by \Cref{lem:AM-vs-FM})}\\
&=\left(1+\frac{1}{2}\log\pi_\mathrm{min}\right)\frac{O(n^4)}{\lambda(P_\mathrm{AM})}\tag*{(by \Cref{equ:lambda-def})}\\
&=\left(1+\frac{1}{2}\log\pi_\mathrm{min}\right) O(n^7\log n)\tag*{(by \Cref{cor:wilson-spectral-gap})}\\
&= O(n^8\log n). \qedhere
\end{align*}
\end{proof}

\bibliographystyle{alpha}
\bibliography{refs}


\end{document}